\newtheorem{theorem}{Theorem} 
\newtheorem{lemma}[theorem]{Lemma} 
\newtheorem{proposition}[theorem]{Proposition} 
\newtheorem{algorithm}[theorem]{Algorithm} 
\theoremstyle{definition} 
\newtheorem{example}[theorem]{Example} 
\newtheorem{remark}[theorem]{Remark} 
\newcommand{\CC}{{\mathbb C}}
\newcommand{\NN}{{\mathbb N}}
\newcommand{\QQ}{{\mathbb Q}}
\newcommand{\RR}{{\mathbb R}}
\newcommand{\TT}{{\mathbb T}}
\newcommand{\ZZ}{{\mathbb Z}}
\newcommand{\calA}{{\mathcal A}}
\newcommand{\Adot}{{\calA_\bullet}}
\newcommand{\calB}{{\mathcal B}}
\newcommand{\calC}{{\mathcal C}}
\newcommand{\Bdot}{{\calB_\bullet}}
\newcommand{\calV}{{\mathcal V}}
\newcommand{\bzero}{{\boldsymbol 0}}
\newcommand{\vol}{{\rm vol}}
\newcommand{\conv}{{\rm conv}}
\newcommand{\rank}{{\rm rank}}
\newcommand{\Hom}{{\rm Hom}}
\newcommand{\GL}{{\rm GL}}
\newcommand{\MV}{{\rm MV}}
\newcommand{\lhra}{\ensuremath{\lhook\joinrel\relbar\joinrel\relbar\joinrel\rightarrow}}
\newcommand{\lthra}{\ensuremath{\relbar\joinrel\relbar\joinrel\twoheadrightarrow}}
\newcommand{\defcolor}[1]{{\color{blue}#1}} 
\newcommand{\demph}[1]{\defcolor{{\sl #1}}}
\newcommand{\IK}{I}
\newcounter{FNC}[page]
\def\fauxfootnote#1{{\addtocounter{FNC}{2}${\color{magenta}^\fnsymbol{FNC}}$%
     \let\thefootnote\relax\footnotetext{{\color{magenta}$^\fnsymbol{FNC}$#1}}}}
\title{Solving Decomposable Sparse Systems} 
\author[T.~Brysiewicz]{Taylor Brysiewicz} 
\address{T.~Brysiewicz\\ 
         MPI for Mathematics in the Sciences\\
         Leipzig\\ 
         Germany} 
\email{taylor.brysiewicz@mis.mpg.de}
\urladdr{https://sites.google.com/view/taylorbrysiewicz/} 
\author[J.~I.~Rodriguez]{Jose Israel Rodriguez}
\address{J.~I.~Rodriguez\\Department of Mathematics\\
         University of Wisconsin\\
         Madison, WI 53706\\         
         USA}
\email{jRodriguez43@wisc.edu}
\urladdr{http://www.math.wisc.edu/\~{}jose/}
\author[F.~Sottile]{Frank Sottile} 
\address{F.~Sottile\\ 
         Department of Mathematics\\ 
         Texas A\&M University\\ 
         College Station\\ 
         Texas \ 77843\\ 
         USA} 
\email{sottile@math.tamu.edu} 
\urladdr{http://www.math.tamu.edu/\~{}sottile} 
\author[T.~Yahl]{Thomas Yahl} 
\address{T.~Yahl\\ 
         Department of Mathematics\\ 
         Texas A\&M University\\ 
         College Station\\ 
         Texas \ 77843\\ 
         USA} 
\email{thomasjyahl@math.tamu.edu} 
\urladdr{http://www.math.tamu.edu/\~{}thomasjyahl} 
\thanks{Research of Sottile supported by grant 636314 from the Simons Foundation.}
\subjclass{65H10, 14M25, 65H20}
\keywords{sparse polynomial systems, homotopy continuation, algorithm, Galois group} 
\begin{document} 
 
\begin{abstract}
  Am\'endola et al.\ proposed a method for solving systems of polynomial equations lying in a family which exploits a
  recursive decomposition into smaller systems.
  A family of systems admits such a decomposition if and only if the corresponding Galois group is imprimitive.
  When the Galois group is imprimitive we consider the problem of computing an explicit decomposition.
  A consequence of Esterov's classification of sparse polynomial systems with imprimitive Galois groups is that this
  decomposition is obtained by inspection.
  This leads to a recursive algorithm to compute complex isolated solutions to decomposable sparse systems, which we present and
  give evidence for its efficiency. 
\end{abstract} 
\maketitle 


\section*{Introduction}
The Galois group of a univariate polynomial exposes its internal symmetry and controls its solvability by radicals. 
More generally, families of polynomial systems (and of geometric problems) have Galois groups~\cite{Harris} which expose
their internal symmetry. 
We describe how to solve a polynomial system using numerical homotopy continuation~\cite{Morgan,SW05}
by exploiting the structure of a family to which it belongs.

A family of polynomial systems (geometric problems) is represented as a branched cover of algebraic varieties
$\pi\colon X\to Z$ where $Z$ parameterizes the family and the fiber over $z\in Z$ consists of complex solutions to the
corresponding instance.
Removing the branch locus gives a covering space whose monodromy group is a Galois group~\cite{Harris} of a field
extension. 
Pirola and Schlesinger~\cite{PirolaSchlesinger} observed that the Galois group acts imprimitively if 
and only if after replacing $Z$ by a Zariski open subset $V$, the branched cover factors as a composition
 \begin{equation}\label{Eq:decomposition}
  \pi^{-1}(V)\ \longrightarrow\  Y\ \longrightarrow\  V
 \end{equation}
of nontrivial branched covers, in which case $\pi$ is decomposable.

Am\'endola et al.~\cite{AmendolaRodriguez} explained how to use an explicit decomposition to compute fibers
$\pi^{-1}(z)$ using monodromy~\cite{DHJLLS}.
They showed how several examples in the literature involve a decomposable branched cover.
  In particular, Robert's cognates in kinematics and label swapping in algebraic statistics are used to illustrate the utility of
  decomposability.
  Examples like these span several disciplines and serve as a primary motivation for our study.
For these examples, the variety $Y$
and intermediate maps were determined using invariant theory as there was a finite group acting as automorphisms of
$\pi\colon X\to Z$.
In general, it is  nontrivial to determine a decomposition~\eqref{Eq:decomposition} of a branched cover $\pi\colon X\to Z$
with imprimitive Galois group, especially when the cover admits only the trivial automorphism.

Esterov~\cite{Esterov} determined which systems of sparse polynomials have an imprimitive Galois group.
One goal was to classify those which are solvable by radicals.
He identified two simple structures which imply that the system is decomposable. In these cases, the decomposition is transparent.
He also showed that the Galois group is full symmetric when neither structure occurs.
We use Esterov's classification to give a recursive numerical homotopy continuation algorithm for solving decomposable
sparse systems.

The first such structure is when a polynomial system is composed with a monomial map, such as
$g(x^3)=0$.
To solve this, first solve $g(y)=0$ and then extract third roots of each solution.
The second structure is when the system is triangular, such as
 \[
     f(x,y)\ =\ g(y)\ =\ 0\,.
\]
To solve this, first  solve $g(y)=0$ and then for each solution $y^*$, solve $f(x,y^*)=0$.
The goal of the paper is to recognize and exploit these structures for solving polynomial systems, where
by \demph{solve}, we mean, ``Find all isolated solutions over the complex numbers with nonzero coordinates." 

In general, Esterov's classification leads to a sequence of branched covers, each corresponding to 
a sparse system with symmetric monodromy or to a monomial map.
Our algorithm identifies this structure and uses it to recursively solve a decomposable system.
We give some examples which demonstrate that, despite its overhead, this algorithm is a significant improvement over a
direct use of the polyhedral homotopy~\cite{HuberSturmfels,V99}.

Throughout this paper we assume each  polynomial of a system is prescribed by a finite sum of terms, which consist of a monomial multiplied
by a coefficient.
We use the terminology  \demph{sparse polynomial system} when the monomials of each finite sum are known.
A polynomial system presented as a straight-line program would not be considered sparse, although it could theoretically be translated into one. 
We develop algorithms for solving sparse polynomial systems, which is in comparison to the monodromy methods proposed in
\cite{AmendolaRodriguez} where there is no sparsity requirement. 
Sparsity is important for us because we use the monomial support to identify triangular and lacunary structure.

We say \demph{general sparse polynomial system}, when the coefficients appearing in the sparse system are general.
By the Bernstein-Kushnirenko Theorem~\cite{Bernstein,Kushnirenko}, 
the number of complex isolated solutions to a general sparse system of equations depends only on the convex hulls of the exponent vectors of
the monomials.  
When the system supported on the vertices is decomposable, we propose using it as a start system in a
homotopy to solve the original system.
This is similar in spirit to the B\'ezout or total degree homotopy~\cite{Garcia79}.

In Section~\ref{S:BCGGDP} we present some general background on Galois groups of branched covers and explain the relation
between decompositions of the branched cover and imprimitivity of the Galois group, finishing with a discussion of how to
obtain an explicit decomposition.
We specialize to decomposable sparse systems in Section~\ref{S:DSS}, where we explain Esterov's classification and describe
how to compute the corresponding decompositions.
We present our algorithms for solving sparse decomposable systems in Section~\ref{S:Algorithm}, and give an application to
furnish start systems for homotopies.
Section~\ref{S:computations} gives timings and information on the performance of our algorithm.


\section{Branched covers, Galois groups, and decomposable projections} 
\label{S:BCGGDP}

We sketch some mathematical background, first explaining how Galois groups arise from branched covers and the relationship
between imprimitive Galois groups and decompositions of the branched cover.
We then discuss how to compute a decomposition when the Galois group is imprimitive.

\subsection{Galois groups}

Let $\pi\colon X\to Z$ be a dominant map ($\pi(X)$ is dense in $Z$) of irreducible complex algebraic varieties of the same
dimension.
Such a map is a \demph{branched cover}.
There exists a number $d$ and a
nonempty
Zariski open (in particular, dense, open, and path-connected) subset $U\subset Z$ such that for each $z\in U$, $\pi^{-1}(z)$ consists of $d$
points.
The branched cover is \demph{trivial} when $d=1$.
We define two subgroups of the symmetric group $S_d$ which are well-defined up to conjugacy. 

We may further assume that the map $\pi^{-1}(U)\to U$ is a degree $d$ covering space.
This covering space
has a monodromy group which acts on a fiber $\pi^{-1}(z)$ for $z\in U$ as follows~\cite[Ch.~13]{Munkres}.
Given a loop $\gamma$ in $U$ based at $z$, the lifts of $\gamma$ give $d$ paths in $X$ connecting points of $\pi^{-1}(z)$,
and thus a permutation of $\pi^{-1}(z)$.
The collection of all such monodromy permutations forms the \demph{monodromy group} of $\pi$, which acts transitively
because $\pi^{-1}(U)$ is connected as $X$ is irreducible.

Second, as $\pi\colon X \to Z$ is dominant, the field $\CC(Z)$ of rational functions on $Z$ is a subfield of $\CC(X)$, the field of
  rational functions on $X$.
Since $\pi$ has degree $d$, $\CC(X)$ is a degree $d$ extension of $\CC(Z)$.
If \defcolor{$K$} is the Galois closure of $\CC(X)/\CC(Z)$, then the \demph{Galois group} \defcolor{$G_\pi$} of the
branched cover $\pi\colon X\to Z$ is the Galois group of $K/\CC(Z)$. 
Harris~\cite{Harris} gave a modern proof that the Galois group equals the monodromy group, but this idea goes back
at least to Hermite~\cite{Hermite}.

We recall some terminology concerning permutation groups~\cite{Wielandt}.
Suppose that $G\subset S_d$ is a permutation group acting transitively on the set $\defcolor{[d]}:=\{1,2,\dotsc,d\}$.
A \demph{block} of $G$ is a subset $B\subset [d]$ such that for every $g\in G$, either $gB=B$ or $gB\cap B=\emptyset$.
The subsets $\emptyset$, $[d]$, and every singleton are blocks of every permutation group.
If these trivial blocks are the only blocks, then $G$ is \demph{primitive} and otherwise it is \demph{imprimitive}. 

When $G$ is imprimitive, we have a factorization $d=ab$ with $1<a,b<d$ and there is a bijection
$[a]\times[b]\leftrightarrow[d]$ such that $G$ preserves the projection $[a]\times[b]\to [b]$.
That is, the fibers $\{[a]\times\{i\}\mid i\in [b]\}$ are blocks of $G$,  its action on this set of blocks gives a
homomorphism $G\to S_b$ with transitive image, and the kernel acts transitively on each fiber $[a]\times\{i\}$.
In particular, $G$ is a subgroup of the wreath product
$S_a\wr S_b = (S_a)^b\rtimes S_b$, where $S_b$ acts on $(S_a)^b$ by permuting factors.

We observe a second characterization of imprimitive permutation groups $G$.
Since $G$ acts transitively, if $H\subset G$ is the stabilizer of a point $c\in[d]$, then $H$ has index $d$ in $G$
and we may identify $[d]$ with the cosets $G/H$.
If $B$ is a nontrivial block of $G$ containing $c$, then its stabilizer $L$ is a proper subgroup of $G$ that 
strictly contains $H$.
Furthermore, using the map $G/H \to G/L$, we see that $G$ is imprimitive if and only if the stabilizer of the point
$eH\in G/H$ is not a maximal subgroup.

\subsection{Decomposable branched covers}

A branched cover $\pi\colon X\to Z$ is \demph{decomposable} if there is a nonempty Zariski open subset $V\subset Z$ over which
$\pi$ factors
 \begin{equation}\label{Eq:Ndecomposition}
  \pi^{-1}(V)\ \xrightarrow{\; \varphi\;}\ Y\ \xrightarrow{\; \psi\;}\ V\,,
 \end{equation}
with $\varphi$ and $\psi$ both nontrivial branched covers.
The fibers of $\varphi$ over points of $\psi^{-1}(v)$ are blocks of the action of $G_\pi$ on $\pi^{-1}(v)$, which
implies that $G_\pi$ is imprimitive. 
Pirola and Schlesinger~\cite{PirolaSchlesinger} observed that decomposability of $\pi$ is equivalent to imprimitivity of
$G_\pi$.
We give a proof, as we discuss the problem of computing a decomposition.

\begin{proposition}\label{P:DecomposableIsImprimitive}
 A branched cover is decomposable if and only if its Galois group is~imprimitive.
\end{proposition}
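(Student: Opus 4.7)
The plan is to prove both implications separately. The forward direction is essentially sketched in the paragraph preceding the statement: if $\pi$ factors as $\pi^{-1}(V) \xrightarrow{\varphi} Y \xrightarrow{\psi} V$ with both $\varphi$ and $\psi$ nontrivial branched covers, then for generic $v \in V$ the fibers of $\varphi$ over points of $\psi^{-1}(v)$ partition $\pi^{-1}(v)$ into $\deg\psi > 1$ subsets, each of size $\deg\varphi > 1$. Any loop in $V$ based at $v$ lifts to a path in $X$ whose image under $\varphi$ is a lift of the same loop to $Y$, so the induced monodromy permutation of $\pi^{-1}(v)$ preserves these subsets. They are therefore nontrivial blocks for $G_\pi$, which is thus imprimitive.

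For the converse, I would pass to function fields and invoke the Galois correspondence. Let $K$ be the Galois closure of $\CC(X)/\CC(Z)$ with Galois group $G = G_\pi$, and let $H \subset G$ be the subgroup with $K^H = \CC(X)$. Under the identification of monodromy with Galois action, a generic fiber $\pi^{-1}(v)$ corresponds to the coset space $G/H$. By the second characterization of imprimitivity recorded above, $H$ is not a maximal subgroup of $G$, so there is an intermediate subgroup $H \subsetneq L \subsetneq G$ whose fixed field satisfies $\CC(Z) \subsetneq K^L \subsetneq \CC(X)$. Let $Y$ be the normalization of $Z$ in $K^L$; the tower of inclusions of function fields corresponds to dominant maps $X \to Y \to Z$ with generic fiber degrees $[L:H] > 1$ and $[G:L] > 1$.

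The main obstacle is upgrading this a priori birational statement to the geometric factorization demanded by the proposition, namely an actual composition of branched covers over a Zariski open $V \subset Z$. I would handle this by choosing $Y$ normal so that $\psi\colon Y \to Z$ is a regular branched cover, and then shrinking $Z$ to a $V$ that avoids the image of the indeterminacy locus of the rational map $X \to Y$ along with the branch loci of $\varphi$ and $\psi$. Over such a $V$, both $\varphi$ and $\psi$ restrict to honest nontrivial branched covers whose composition agrees with $\pi|_{\pi^{-1}(V)}$, exhibiting $\pi$ as decomposable. This last step is where care is required but no new ideas enter; the substantive content of the proposition is the Galois-theoretic passage from a non-maximal stabilizer to an intermediate function field.
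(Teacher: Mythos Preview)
Your proposal is correct and follows essentially the same approach as the paper. Both arguments pass to the Galois closure $K$, use imprimitivity to produce an intermediate subgroup $H\subsetneq L\subsetneq G_\pi$ (you via non-maximality of $H$, the paper via the stabilizer of a block containing $\CC(X)$, which is the same thing), take $Y$ to be a model of the intermediate fixed field $K^L$, and then shrink $Z$ to a Zariski open $V$ over which the rational maps become regular branched covers; your use of the normalization of $Z$ in $K^L$ is a clean way to realize what the paper leaves as ``any variety $Y'$ with function field $M$.''
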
  

\begin{proof}
  We need only to prove the reverse direction.
  As above, let $\CC(Z)$, $\CC(X)$, and $K$ be the function fields of $Z$, $X$, and the Galois closure of
  $\CC(X)/\CC(Z)$, respectively, and let $G_\pi$ be the Galois group of $K/\CC(Z)$.
  Let $H$ be the subgroup of $G_\pi$ such that $\CC(X)=K^H$, the fixed field of $H$.
  The set of Galois conjugates of $\CC(X)$ forms the orbit $G_\pi/H$, and the number of conjugates is the degree of
  the branched cover $X\to Z$.

  If $G_\pi$ acts imprimitively, then the stabilizer $L$ of a nontrivial block $B$ containing $\CC(X)$ is a proper subgroup 
  properly containing $H$.
  Thus its fixed field $\defcolor{M}:=K^L$, which is the intersection of the conjugates of $\CC(X)$ in the block $B$, is
  an intermediate field between $\CC(Z)$ and $\CC(X)$.
  For any variety $Y'$ with function field $M$, there will be Zariski open subsets $Y$ of $Y'$ and $V$ of $Z$ such
  that~\eqref{Eq:Ndecomposition} holds.
  Indeed, the inclusions of function fields $\CC(Z)\subset M\subset\CC(X)$ give dominant rational maps $X\dasharrow Y'\dasharrow Z$.
  Replacing the varieties $X$, $Y'$, and $Z$ by Zariski open subsets, we may assume that these are regular maps, hence branched covers.
  Finally, we may replace $Z$ by a nonempty Zariski open subset $V$ contained in the image of $X$ under the composition and let $Y$ be the inverse
  image of $V$ in $Y'$.
\end{proof}
  
  While imprimitivity is equivalent to decomposability, the proof does not address how to compute
  the variety $Y$ of~\eqref{Eq:Ndecomposition}.
  One way is as follows.
  Replace $Z$ and $X$ by affine open subsets, if necessary, and let 
  $y_1,\dotsc,y_m\in\CC[X]$ be regular functions on $X$ that generate $M$ over $\CC(Z)$.
  Let $x_1,\dotsc,x_m$ be indeterminates and let $I\subset\CC(Z)[x_1,\dotsc,x_m]$ be the kernel of the map
  $\CC(Z)[x_1,\dotsc,x_m]\to \CC(X)$ given by $x_i\mapsto y_i$.
  This is the  zero-dimensional ideal of algebraic relations satisfied by $y_1,\dotsc,y_m$.
  Replacing $Z$ by a Zariski open subset of affine space if necessary, we may choose generators $g_1,\dotsc,g_r$ of $I$ that lie in
  $\CC[Z][x_1,\dotsc,x_m]$---their coefficients are regular functions on $Z$.
  There is an open subset $V\subset Z$ such that  the ideal $I$ defines an irreducible variety
  $\defcolor{Y}\subset V\times\CC^m$ whose projection to $V$ is a branched cover and whose function field is $M$.
  Replacing $X$ by $\pi^{-1}(V)$, we obtain the desired decomposition, with the map $\pi^{-1}(V)\to Y$ given by the functions
  $y_1,\dotsc,y_m$. 

  This does not address the practicality of computing $Y$, but it does indicate an approach.
  Given the subgroup $L$ of $G_\pi$ and a set of generators of $\CC[X]$ over $\CC[Z]$, if we apply the Reynolds averaging 
  operator~\cite{DerksenKemper} for $L$ to monomials in the generators, we obtain the desired generators $y_1,\dotsc,y_m$
  of $M$. 
  One problem is that elements of $G_\pi$ may not act on $X$, so their action on elements of $\CC[X]$ may be hard
  to describe.

  There is an exception to this.
  If $L\neq H$ normalizes $H$ in $G$ and $\pi\colon X\to Z$ is a covering space, then $\defcolor{\Gamma}:=L/H$ acts freely
  on $X$, preserving the fibers---it is a group of deck transformations of $X\to Z$~\cite[Ch~13]{Munkres}.
  When $\Gamma$ acts on the original branched cover, $Y=X/\Gamma$ is the desired space, and both $Y$ and the map
  $X\to Y$ may be computed by applying the Reynolds operator for $\Gamma$ to generators of $\CC[X]$.
  The examples given in~\cite[\S~5]{AmendolaRodriguez} are of this form, and the authors use this approach to compute
  the decomposition~\eqref{Eq:Ndecomposition}.

\begin{example}\label{Ex:NoMaximal}
  Not all imprimitive groups have the property that the normalizer $L$ of a point stabilizer $H$ properly contains $H$.
  Consider the wreath product $G:=S_3\wr S_3$, which acts imprimitively on the nine-element set
  $[3]\times[3]$.
  The stabilizer of the point $(3,3)$ is the subgroup $H=((S_3)^2\times S_2)\rtimes S_2$, where $S_2\subset S_3$ is the
  stabilizer of $\{3\}$.
  Then $H$ is its own normalizer in $G$, as $S_2$ is its own normalizer in $S_3$.\hfill$\diamond$
\end{example}

  All imprimitive Galois groups in the Schubert calculus constructed in~\cite[\S~3]{GIVIX} and in~\cite{SWY}
  have the stabilizer $H$ of $\CC(X)$ equal to its normalizer.
  For these, the decomposition of the branched cover follows from a deep structural understanding of
  the corresponding Schubert problem.
  There remain many Schubert problems whose Galois group is expected to be imprimitive, yet we do not 
  know a decomposition~\eqref{Eq:Ndecomposition} of the corresponding branched cover.

  The structure of imprimitivity/decomposability found in~\cite{GIVIX,SWY} was not initially
  apparent, and further study was needed to determine a  decomposition.
  In contrast, a consequence of Esterov's study of Galois groups of sparse polynomial systems is that decomposability is
  transparent and may be deduced by inspection and computing the decomposition~\eqref{Eq:Ndecomposition} is algorithmic.
  This is explained in the following section.

\section{Decomposable Sparse Systems}\label{S:DSS}

We discuss sparse systems of (Laurent) polynomials and interpret them as branched covers.
Then we state the Bernstein-Kushnirenko Theorem for their numbers of complex isolated solutions,
and give the relation between integer linear algebra and maps of algebraic tori.
We then present Esterov's criteria for imprimitivity, and show how these criteria lead to decompositions of
the corresponding branched cover.

\subsection{Sparse Polynomial Systems}

Let $\defcolor{\CC^\times}:=\CC{\smallsetminus}\{0\}$ be the multiplicative group of nonzero complex numbers and
\defcolor{$(\CC^\times)^n$} be the $n$-dimensional complex torus.
For each $\alpha = (\alpha_1,\dotsc,\alpha_n)\in\ZZ^n$, the  \demph{(Laurent) monomial} with exponent $\alpha$,
\[
  \defcolor{x^\alpha}\ :=\ x_1^{\alpha_1} x_2^{\alpha_2}\dotsb x_n^{\alpha_n}\,,
\]
is a character (multiplicative map)  $x^\alpha\colon (\CC^\times)^n\to\CC^\times$.
A finite linear combination 
 \begin{equation}\label{Eq:sparsePolynomial}
   f\ =\ \sum c_\alpha x^\alpha\qquad c_\alpha\in\CC
 \end{equation}
of monomials is a \demph{(Laurent) polynomial}, which is a function $f\colon  (\CC^\times)^n\to\CC$.

The class of sparse polynomial systems pertains to those systems whose monomial structure for each equation is pre-determined. Our polynomial systems naturally occur in a family of sparse polynomial systems determined only by the monomials appearing in each equation of the system.

For a nonempty finite set $\calA\subset\ZZ^n$, the set of polynomials~\eqref{Eq:sparsePolynomial} satisfying
$c_\alpha\neq 0 \Rightarrow \alpha\in\calA$ is the vector space \defcolor{$\CC^\calA$} of polynomials of
\demph{support} $\calA$.
Given a collection $\defcolor{\Adot}:=(\calA_1,\dotsc,\calA_n)$ of nonempty finite subsets of $\ZZ^n$, write
$\defcolor{\CC^\Adot}:=\CC^{\calA_1}\times\dotsb\times\CC^{\calA_n}$ for the vector space of $n$-tuples $F=(f_1,\dotsc,f_n)$ of
polynomials, where $f_i$ has support $\calA_i$, for each $i$.
An element $F\in\CC^\Adot$ is a list of coefficients of these polynomials, which corresponds to a system of polynomial equations
 \[
     f_1(x_1,\dotsc,x_n)\ =\ 
     f_2(x_1,\dotsc,x_n)\ =\ \dotsb\ =\ 
     f_n(x_1,\dotsc,x_n)\ =\ 0\,,
 \]
written $F(x) = 0$. Such a system of polynomial equations is called a  \demph{sparse polynomial system} of \demph{support  $\Adot$}.
Its set of solutions in $(\CC^\times)^n$ is \defcolor{$\calV(F)$}.

Given supports $\Adot=(\calA_1,\dotsc,\calA_n)$, consider the incidence variety
\[
  \defcolor{X_{\Adot}}\ :=\ \left\{(F,x)\in\CC^{\Adot}\times(\CC^\times)^n \mid F(x) = 0  \right\}.
\]
equipped with projections $\defcolor{\pi}\colon X_{\Adot}\to\CC^{\Adot}$ and
$\defcolor{p}\colon X_{\Adot}\to(\CC^\times)^n$.
For any point $x\in(\CC^\times)^n$, the fiber $p^{-1}(x)$ is a vector subspace of $\CC^{\Adot}$ of codimension $n$.
Indeed, for each $i=1,\dotsc,n$, the condition that $f_i(x)=0$ is a linear equation in the coefficients $\CC^{\calA_i}$ of 
$f_i$, and these $n$ linear equations are independent.
Thus $X_{\Adot}$ is irreducible of dimension
\[
  \dim(\CC^\times)^n + \dim\CC^{\Adot} - n\ =\ \dim\CC^{\Adot}\,.
\]

For $F\in\CC^{\Adot}$, the fiber $\pi^{-1}(F)$ is the set $\calV(F)$ of solutions in $(\CC^\times)^n$ to $F(x)=0$.
The image of $X_{\Adot}$ under $\pi$ either lies in a proper subvariety \defcolor{$Z$} of $\CC^{\Adot}$ or it is dense in
$\CC^{\Adot}$. 
In the first case, there is a Zariski open subset $\defcolor{U}:=\CC^{\Adot}\smallsetminus Z$ consisting of polynomial
systems $F(x)=0$ with no solution.
In the second case, $\pi\colon X_{\Adot}\to\CC^{\Adot}$ is a branched cover, so there is a positive integer $d$ and a Zariski
open subset $U\subset\CC^{\Adot}$ consisting of polynomial systems $F(x)=0$ with $d$ isolated solutions.
Both cases are determined by the polyhedral geometry of the supports $\Adot$ through the Bernstein-Kushnirenko Theorem.

For convex sets $K_1,\dotsc,K_n\subset\RR^n$ and nonnegative real numbers, $t_1,\dotsc,t_n\in\RR_{\geq0}$, Minkowski proved
that the volume of the Minkowski sum
\[
  t_1K_1+\dotsb+t_nK_n\ :=\ \{ t_1 x_1+\dotsb + t_nx_x \mid x_i\in K_i\}
\]
is a homogeneous polynomial of degree $n$ in $t_1,\dotsc,t_n$.
Its coefficient of $t_1\dotsb t_n$ is the \defcolor{mixed volume} of $K_1,\dotsc,K_n$.
For $\Adot=(\calA_1,\dotsc,\calA_n)$, let \defcolor{$\MV(\Adot)$} be the mixed volume of
$\conv(\calA_1),\dotsc,\conv(\calA_n)$, where  \defcolor{$\conv(\calA_i)$} is the convex hull of $\calA_i$.  
This is described in more detail in~\cite[Sect.~IV.3]{Ewald}.
We give the Bernstein-Kushnirenko Theorem~\cite{Bernstein,Kushnirenko}.

\begin{proposition}\label{prop:BKK}
  Let $F(x)=0$ be a system of polynomials with support $\Adot$.
  The number of complex isolated solutions in $(\CC^\times)^n$ to $F(x)=0$ is at most $\MV(\Adot)$.
  There is a Zariski open subset $U\subset\CC^{\Adot}$ consisting of systems with exactly $\MV(\Adot)$ solutions.
\end{proposition}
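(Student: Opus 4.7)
The plan is to prove both assertions together: first construct a nonempty Zariski open subset $U\subset\CC^{\Adot}$ on which $F(x)=0$ has exactly $\MV(\Adot)$ isolated solutions in $(\CC^\times)^n$, then deduce the upper bound for arbitrary $F$ by deforming into $U$. The crux is to recognize the generic solution count as an intersection number on a toric compactification of $(\CC^\times)^n$ and to identify that intersection number with the mixed volume.

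For the generic count I would build a smooth projective toric variety $X_\Sigma$ whose fan $\Sigma$ is a common refinement of the inner normal fans of $\conv(\calA_1),\dotsc,\conv(\calA_n)$. Each $\calA_i$ determines a globally generated line bundle $L_i$ on $X_\Sigma$, and each $f_i\in\CC^{\calA_i}$ extends to a global section $\tilde f_i$ of $L_i$. By Bertini's theorem, for $F$ in a nonempty Zariski open $U\subset\CC^{\Adot}$ the divisors $\{\tilde f_i=0\}$ meet transversally in $X_\Sigma$ and the intersection lies entirely in the dense torus $(\CC^\times)^n$; hence the solution count for such $F$ equals the intersection number $L_1\dotsb L_n$. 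The combinatorial heart of the theorem is the identification $L_1\dotsb L_n=\MV(\Adot)$, established either by reducing via polarization to Kushnirenko's equal-supports case $L^n=n!\vol(\conv\calA)$, or by the explicit mixed-subdivision deformation of Huber--Sturmfels which tracks each mixed cell to a unique solution.

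For the upper bound on an arbitrary $F^0\in\CC^{\Adot}$ whose solutions in $(\CC^\times)^n$ are isolated, I would choose a generic path $F^t\to F^0$ with $F^t\in U$ for small $t\neq 0$. Each isolated solution of $F^0$ is the endpoint of at least one continuous branch of solutions of $F^t$ as $t\to 0$, by upper semicontinuity of intersection multiplicity under flat deformation; this injects the set of isolated torus solutions of $F^0$ into the $\MV(\Adot)$-element fiber $\calV(F^t)$, giving the stated bound.

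The step I expect to be the main obstacle is the boundary avoidance in constructing $U$. For each torus-invariant stratum of $X_\Sigma$ corresponding to a cone $\tau\in\Sigma$, the restriction of $\tilde f_i$ is itself a sparse polynomial on a lower-dimensional torus, supported on the subset of $\calA_i$ lying in the corresponding face of $\conv(\calA_i)$. One must show by an inductive dimension count over the strata that for generic coefficients these restrictions admit no common zero, so that all $\MV(\Adot)$ intersections occur in $(\CC^\times)^n$ and the toric intersection count agrees with the count of torus solutions of $F(x)=0$.
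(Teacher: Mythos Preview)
The paper does not prove this proposition at all: it is stated as the classical Bernstein--Kushnirenko Theorem with citations to \cite{Bernstein,Kushnirenko}, and the surrounding text merely records Minkowski's criterion for when $\MV(\Adot)=0$. So there is no ``paper's own proof'' to compare against.

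Your sketch is a reasonable outline of one standard modern proof, via a toric compactification and intersection theory, and the identification $L_1\dotsb L_n=\MV(\Adot)$ is indeed the combinatorial core. Two remarks on the details. First, your upper-bound argument should be phrased a bit more carefully: you are not literally injecting the isolated solutions of $F^0$ into $\calV(F^t)$, but rather choosing disjoint neighborhoods of the isolated solutions of $F^0$ and using conservation of local intersection number to produce at least one solution of $F^t$ in each; distinctness then gives the bound. Second, your anticipated obstacle (boundary avoidance) is exactly right, and the standard way to handle it is what you describe---restriction to torus orbits and an inductive dimension count---or equivalently Bernstein's original argument via initial forms, which shows that a common zero on a boundary stratum forces the face systems to have a common torus solution, a codimension-one condition on coefficients. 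Either route works; neither is trivial, but both are well documented in the literature the paper cites.
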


Thus $\pi\colon X_{\Adot}\to\CC^{\Adot}$ is a branched cover if and only if $\MV(\Adot)\neq 0$, which was determined by
Minkowski as follows.
For a nonempty subset $I\subseteq\defcolor{[n]}:=\{1,\dotsc,n\}$, write $\defcolor{\calA_I}:=(\calA_i\mid i\in I)$ and
\defcolor{$\ZZ\calA_I$} be the affine span of the supports in $\calA_I$.
This is the free abelian group generated by differences $\alpha-\beta$ for $\alpha,\beta\in \calA_i$ for some $i\in I$.
Then $\MV(\Adot) = 0$ if and only if there exists a nonempty subset $I\subseteq[n]$ such
that $|I|$ exceeds $\rank(\ZZ\calA_I)$.
In particular, $\MV(\Adot)\neq 0$ implies that $\defcolor{\ZZ\Adot}=\ZZ\calA_{[n]}$ has full rank $n$.

The branched cover $\pi\colon X_{\Adot}\to\CC^{\Adot}$ is nontrivial 
when  $\MV(\Adot)>1$.
Given supports $\Adot$ with  $\MV(\Adot)\neq 0$, let \defcolor{$G_\Adot$} be the Galois group of the branched cover
$X_{\Adot}\to\CC^{\Adot}$.

\subsection{Integer linear algebra and coordinate changes}\label{SS:CoordinateChanges}

As a monomial $x^\beta$ for $\beta\in\ZZ^n$ is invertible on $(\CC^\times)^n$, polynomials $f$ and $x^\beta f$ have the
same sets of zeroes.
If $\calA$ is the support of $f$, then the support of $x^\beta f$ is $\beta+\calA$, the translation of $\calA$ by $\beta$. 
Thus translating the supports of sparse polynomials by integer vectors does not change any assertions about their
zeroes in $(\CC^\times)^n$.
Similarly, $\ZZ\calA=\ZZ(\beta+\calA)$.
Consequently, we will henceforth assume that ${\bf{0}}\in\calA$, for then $\ZZ\calA$ will be the $\ZZ$-linear span
of $\calA$.

We identify the set $\Hom((\CC^\times)^n,\CC^\times)$ of characters on $(\CC^\times)^n$ with the free abelian group
$\ZZ^n$.
A group homomorphism $\Phi\colon(\CC^\times)^m\to(\CC^\times)^k$ is determined by $k$ characters of $(\CC^\times)^m$,
equivalently by a homomorphism (linear map) $\varphi\colon\ZZ^k\to\ZZ^m$ of free abelian groups---$\varphi$ is also the map
pulling a character of $(\CC^\times)^k$ back along $\Phi$.
In particular, an invertible map $\Phi\colon(\CC^\times)^n\to(\CC^\times)^n$ (a monomial change of coordinates) pulls back to an
invertible map $\varphi\colon\ZZ^n\to\ZZ^n$, identifying $\GL(n,\ZZ)$ with the group of possible monomial coordinate changes.
We will write $\Phi=\varphi^*$   and $\varphi=\Phi^*$  for these. 
If $\Phi(x)=(x^{\alpha_1},\dotsc,x^{\alpha_n})$, then the map $\varphi=\Phi^*\colon\ZZ^n\xrightarrow{\,\sim\,}\ZZ^n$ sends the $i$-th standard
basis vector $e_i$ to $\alpha_i$ and is represented by the invertible matrix $A$ whose $i$-th column is $\alpha_i$. When the integer span of
$\alpha_1,\dotsc,\alpha_n$ is $\ZZ^n$, the map $\varphi = \Phi^*$ is invertible. 

Suppose that $f$ is a polynomial on $(\CC^\times)^n$ with support $\calA$.
Given a homomorphism $\Phi\colon(\CC^\times)^m\to(\CC^\times)^n$, the composition $f(\Phi(z))$ for $z\in(\CC^\times)^m$ is
a polynomial with support $\varphi(\calA)$, where the coefficient of $z^\beta$ is the sum of coefficients of $x^\alpha$ for
$\alpha\in \varphi^{-1}(\beta)\cap\calA$.

\subsection{Decompositions of Sparse Polynomial Systems}\label{SS:Decompositions}
We describe two properties that a collection $\Adot$ of supports may have, lacunary and (strictly) triangular, and then
recall Esterov's theorem about the Galois group $G_\Adot$.
We then present explicit decompositions of the projection $\pi\colon X_{\Adot}\to \CC^{\Adot}$ when $\Adot$ is lacunary
and when $\Adot$ is triangular.
These form the basis for our algorithms.

Let $\Adot=(\calA_1,\dotsc,\calA_n)$ be a collection of supports.
Assume that $\MV(\Adot)>1$.
We say that $\Adot$ is \demph{lacunary} if the affine span $\ZZ\Adot\neq\ZZ^n$ (it has rank $n$ as $\MV(\Adot)\neq 0$).
We say that $\Adot$ is \demph{triangular} if there is a nonempty proper subset $\emptyset\neq I\subsetneq[n]$ such that
$\rank(\ZZ\calA_I)=|I|$.
As we explain in Section~\ref{S:SNF}, we may change coordinates and assume that $\ZZ\calA_I\subset\ZZ^{|I|}$ so that
$\MV(\calA_I)$ is defined using $\conv(\calA_i)\subset\RR^{|I|}$ for $i\in I$.
A system $\Adot$ of triangular supports is \demph{strictly triangular} if for some $\emptyset\neq I\subsetneq[n]$ with 
$\rank(\ZZ\calA_I)=|I|$, we have $1<\MV(\calA_I)<\MV(\Adot)$.
It is elementary that if  $\Adot$ is either lacunary or strictly triangular, then the
branched cover $X_\Adot\to\CC^\Adot$ is decomposable and therefore $G_\Adot$ is an imprimitive permutation group.
We show this explicitly in Sections \ref{SSS:Lacunary} and \ref{SSS:triangular}. 
  
\begin{proposition}[Esterov~\cite{Esterov}]
  Let $\Adot$ be a collection of supports with $\MV(\Adot)\neq 0$.
  The Galois group $G_\Adot$ is equal to the symmetric group $S_{\MV(\Adot)}$ if and only if $\Adot$ is neither lacunary
  nor strictly triangular.
\end{proposition}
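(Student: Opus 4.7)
The statement is an equivalence, and I would prove the two directions separately. The reverse direction—that lacunary or strictly triangular supports force $G_\Adot$ to be a proper subgroup of $S_{\MV(\Adot)}$—will follow from Sections~\ref{SSS:Lacunary} and~\ref{SSS:triangular}, where explicit nontrivial decompositions $X_\Adot \to Y \to \CC^\Adot$ are to be constructed. By Proposition~\ref{P:DecomposableIsImprimitive} such decompositions force $G_\Adot$ to be imprimitive, and since $S_d$ is primitive for $d \geq 2$, we conclude $G_\Adot \neq S_{\MV(\Adot)}$.

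The substantive content is the forward direction: assuming $\Adot$ is neither lacunary nor strictly triangular, show $G_\Adot = S_{\MV(\Adot)}$. The plan rests on the classical fact that a primitive permutation group containing a transposition is the full symmetric group, so I would verify both properties for $G_\Adot$ under the stated hypothesis. Transitivity is automatic because $X_\Adot$ is irreducible, so the monodromy group acts transitively on each generic fiber of $\pi$. It remains to produce a transposition and to establish primitivity.

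To exhibit a transposition I would analyze the codimension-one part of the branch locus of $\pi$ in $\CC^\Adot$, the $\calA$-discriminant parametrizing sparse systems with a multiple root in $(\CC^\times)^n$. The key claim is that some irreducible component of this discriminant has a smooth general point at which exactly two solutions coalesce in an ordinary node (local model $y^2 = t$); a small loop around this component then realizes a simple transposition in the monodromy by the Picard--Lefschetz principle. The obstacle here is to guarantee, when $\MV(\Adot) \geq 2$, that at least one discriminant component parametrizes such an ordinary double collision rather than only higher-order degenerations or simultaneous collisions of three or more roots; a standard route is to construct an explicit one-parameter deformation inside $\CC^\Adot$ in which exactly two roots collide transversely.

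Primitivity is where the combinatorial hypotheses on $\Adot$ enter, and this is the deepest step—the genuine content of Esterov's classification. By Proposition~\ref{P:DecomposableIsImprimitive} it suffices to rule out any nontrivial factorization $X_\Adot \to Y \to \CC^\Adot$. Passing to function fields, an intermediate cover corresponds to a proper intermediate field $\CC(\CC^\Adot) \subsetneq M \subsetneq \CC(X_\Adot)$, and I would argue that every such $M$ forces structure on the supports: either the fiber map factors through a monomial quotient of $(\CC^\times)^n$ by a finite subgroup, giving $\ZZ\Adot \neq \ZZ^n$ and hence $\Adot$ lacunary; or $M$ is the function field of a subsystem $F_I(x) = 0$ in independent variables, producing a subset $I\subsetneq[n]$ with $\rank(\ZZ\calA_I) = |I|$ and $1 < \MV(\calA_I) < \MV(\Adot)$, showing $\Adot$ is strictly triangular. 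The hard part is showing that these are the \emph{only} sources of intermediate fields, which would require classifying the toric branched covers of $\CC^\Adot$ that can appear as intermediate $Y$ and matching them against the polyhedral combinatorics of $\Adot$. Combining transitivity, primitivity, and the transposition then yields $G_\Adot = S_{\MV(\Adot)}$.
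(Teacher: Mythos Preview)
The paper does not prove this proposition. It is stated with attribution to Esterov and cited to~\cite{Esterov} without proof; the authors treat it as input to their algorithm rather than something to establish. The only part addressed in the present paper is the easy direction---that lacunary or strictly triangular supports yield an imprimitive (hence non-symmetric) Galois group---and this is done exactly as your first paragraph says, via the explicit decompositions of Lemmas~\ref{L:Lacunary} and~\ref{L:Triangular} together with Proposition~\ref{P:DecomposableIsImprimitive}.

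Your outline of the forward direction (transitivity, a transposition from an ordinary node on the discriminant, and primitivity from a classification of intermediate covers) is a reasonable high-level sketch of how one might attack Esterov's theorem, and you are candid that the primitivity step is where the real work lies. But there is nothing in this paper to compare it against: the forward implication is simply quoted. If you want to assess your sketch, you would need to consult~\cite{Esterov} directly.
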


\subsubsection{Lacunary support}\label{SSS:Lacunary}

Let us begin with an example when $n=2$.
We represent vectors by the columns of a matrix.
Let
\[
  \calA_1\ :=\ \left(\begin{matrix} 0&0&3&6&12\\0&4&3&6&0 \end{matrix}\right)
   \qquad\mbox{and}\qquad
  \calA_2\ :=\ \left(\begin{matrix} 0&3&6&9&9\\0&7&2&1&5 \end{matrix}\right)
\]
be supports in $\ZZ^2$.
Then $\ZZ\Adot$ has index 12 
in $\ZZ^2$ as the map $\varphi(a,b)^T=(3a,4b-a)^T$ is an isomorphism
$\varphi\colon\ZZ^2\xrightarrow{\,\sim\,}\ZZ\Adot$, and $\det(\begin{smallmatrix}3&0\\-1&4\end{smallmatrix})=12$.
If we set $\defcolor{\calB_i}:=\varphi^{-1}(\calA_i)$, then 
\[
  \calB_1\ :=\ \left(\begin{matrix} 0&0&1&2&4\\0&1&1&2&1 \end{matrix}\right)
   \qquad\mbox{and}\qquad
  \calB_2\ :=\ \left(\begin{matrix} 0&1&2&3&3\\0&2&1&1&2 \end{matrix}\right)\ .
\]
We display $\calA_1$, $\calA_2$, $\calB_1$, and $\calB_2$ below.
\[
  \begin{picture}(136,93)(0,-11)
    \put(0,0){\includegraphics{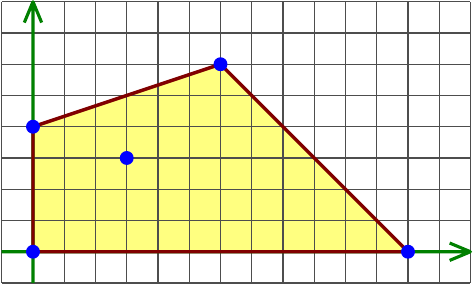}}
    \put(52,-11){$\calA_1$}
  \end{picture}
  \quad
  \begin{picture}(100,93)(0,-11)
    \put(0,0){\includegraphics{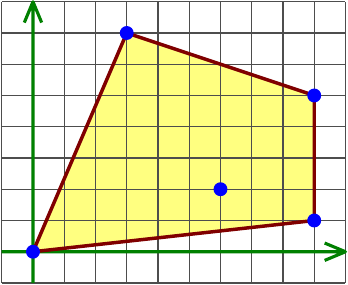}}
    \put(43,-11){$\calA_2$}
  \end{picture}
  \qquad
  \begin{picture}(73,66)(0,-11)
    \put(0,0){\includegraphics{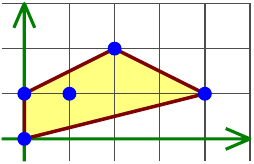}}
    \put(32,-11){$\calB_1$}
  \end{picture}
  \quad
  \begin{picture}(60,66)(0,-11)
    \put(0,0){\includegraphics{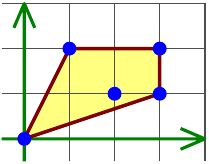}}
    \put(26,-11){$\calB_2$}
  \end{picture}
\]
Then the map $\Phi:=\varphi^*\colon(\CC^\times)^2\twoheadrightarrow(\CC^\times)^2$ is given by
$\Phi(x,y)=(x^3y^{-1},y^4)=(z,w)$.
If 
\begin{align*}
  f_1\ &=\ 1+2y^4+4x^3y^3+8x^6y^6+16x^{12}\\
  f_2\ &=\ 3+5x^3y^7+7x^6y^2+11x^9y+13x^9y^5\ ,
\end{align*}
which is a polynomial system with support $\Adot$, then $f_i=g_i\circ\Phi$, where  
 \begin{align*}
  g_1\ &=\ 1+2w+4zw+8z^2w^2+16z^4w\\
  g_2\ &=\ 3+5zw^2+7z^2w+11z^3w+13z^3w^2\,,
\end{align*}
is a polynomial system with support $\Bdot$. 
Thus the branched cover $X_{\Adot}\to\CC^{\Adot}$ factors  $X_{\Adot}\to X_{\Bdot}\to\CC^{\Bdot}=\CC^{\Adot}$
with the map $X_{\Adot}\to X_{\Bdot}$ induced by $\Phi$.
This implies that $G_{\Adot}\subset (\ZZ/12\ZZ)^{10}\rtimes S_{10}$, as $\ZZ^2/\ZZ\Adot\simeq\ZZ/12\ZZ$, $\Bdot$ is neither
lacunary nor triangular, and $\MV(\Bdot)=10$.

We generalize this example.
Suppose  that $\Adot=(\calA_1,\dotsc,\calA_n)$ is lacunary.
Then $\ZZ\Adot$ has rank $n$ but $\ZZ\Adot\neq\ZZ^n$.
Let $\varphi\colon\ZZ^n\xrightarrow{\,\sim\,}\ZZ\Adot$ be an isomorphism.
Then the corresponding map $\Phi=\varphi^*\colon(\CC^\times)^n\to(\CC^\times)^n$ 
is a surjection with kernel $\Hom(\ZZ^n/\ZZ\Adot,\CC^\times)$.
For each $i=1,\dotsc,n$, set $\defcolor{\calB_i}:=\varphi^{-1}(\calA_i)$.
Then $\Bdot=(\calB_1,\dotsc,\calB_n)$ is a collection of supports with $\ZZ\Bdot=\ZZ^n$.
Since $\varphi$ is a bijection, we identify $\CC^{\calB_i}$ with $\CC^{\calA_i}$ and $\CC^{\Bdot}$ with
$\CC^{\Adot}$.
Given a system $F(x)=0$ with $F\in\CC^{\Adot}$, $\iota(F)(x)=0$ with $\defcolor{\iota(F)}\in\CC^{\Bdot}$ is the corresponding system with support
$\Bdot$. 

\begin{lemma}\label{L:Lacunary}
 Suppose that $\Adot$ is lacunary, $\varphi\colon\ZZ^n\xrightarrow{\,\sim\,}\ZZ\Adot$ is an
 isomorphism with corresponding surjection  $\Phi\colon(\CC^\times)^n\to(\CC^\times)^n$.
 Let $\Bdot:=\varphi^{-1}(\Adot)$ and suppose that $\MV(\Bdot)>1$.
 Then the branched cover $X_\Adot\to\CC^\Adot$ is decomposable and
 $X_\Adot\to X_\Bdot\to\CC^\Adot=\CC^\Bdot$ is a nontrivial decomposition of branched covers induced by the map $\Phi$.
\end{lemma}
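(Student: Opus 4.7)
The plan is to build the decomposition explicitly using $\Phi$ and then verify that both stages are nontrivial branched covers. The identification $\CC^\Adot = \CC^\Bdot$ referenced in the statement comes from the following observation: since $\varphi$ restricts to a bijection $\calB_i \to \calA_i$ for each $i$, relabeling the coefficient of $x^\alpha$ in $F \in \CC^\Adot$ as the coefficient of $z^{\varphi^{-1}(\alpha)}$ defines a linear isomorphism $\iota : \CC^\Adot \xrightarrow{\sim} \CC^\Bdot$. I would use this to transport a system $F \in \CC^\Adot$ to the corresponding system $\iota(F)$ of support $\Bdot$ on the target torus.

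Next I would introduce the morphism $\psi : X_\Adot \to X_\Bdot$ defined by $\psi(F, x) := (\iota(F), \Phi(x))$. To see that $\psi$ lands in $X_\Bdot$, I would invoke the pullback formula from Section~\ref{SS:CoordinateChanges}: because $\Phi = \varphi^*$, the monomial $z^\beta$ on the target pulls back to $x^{\varphi(\beta)}$ on the source. Applied term by term, this gives the fundamental identity $\iota(F)(\Phi(x)) = F(x)$. Thus $F(x)=0$ forces $\iota(F)(\Phi(x))=0$, so $\psi$ is well defined, and the relation $\pi_\Bdot \circ \psi = \iota \circ \pi_\Adot$ is immediate. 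After identifying $\CC^\Bdot$ with $\CC^\Adot$ via $\iota$, this is exactly the desired factorization $\pi_\Adot = \pi_\Bdot \circ \psi$.

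To finish I would count generic fibers. By Proposition~\ref{prop:BKK} the projection $\pi_\Bdot$ has generic fiber of size $\MV(\Bdot)$, which exceeds $1$ by hypothesis, so $\pi_\Bdot$ is a nontrivial branched cover. For $\psi$, the fiber over any $(G, y) \in X_\Bdot$ consists of pairs $(\iota^{-1}(G), x)$ with $x \in \Phi^{-1}(y)$; each such pair does lie in $X_\Adot$ since $\iota^{-1}(G)(x) = G(\Phi(x)) = G(y) = 0$. Hence every fiber has exactly $|\ker \Phi| = [\ZZ^n : \ZZ\Adot]$ points. Because $\Adot$ is lacunary, this index is strictly greater than $1$, and $\psi$ is a nontrivial branched cover.

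The main obstacle is notational rather than mathematical: one must orient $\iota$ so that the identity $\iota(F) \circ \Phi = F$ holds rather than its opposite, and confirm that $\psi$ is a genuine morphism of varieties (polynomial in the coefficients, monomial in the coordinates) landing in the correct incidence variety. Once the dictionary of Section~\ref{SS:CoordinateChanges} is in hand, the factorization, the degree of $\pi_\Bdot$, and the degree of $\psi$ all fit together directly; as a consistency check one also recovers the expected degree identity $\MV(\Adot) = [\ZZ^n : \ZZ\Adot] \cdot \MV(\Bdot)$.
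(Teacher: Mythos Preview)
Your proof is correct and follows essentially the same approach as the paper: you construct the map $\psi = \iota \times \Phi$ (the paper's notation), verify the key identity $\iota(F)(\Phi(x)) = F(x)$ to see it lands in $X_\Bdot$, and check nontriviality of both factors via $\MV(\Bdot)>1$ and $|\ker\Phi| = [\ZZ^n:\ZZ\Adot]>1$. The paper phrases the second step as ``$\ker\Phi$ acts freely on the fibers'' and passes to a Zariski open set to frame things as covering spaces, but the content is the same.
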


\begin{proof}
If $g$ is a polynomial with support $\calB\subset\ZZ^n$, then the composition $g\circ\Phi$ is a polynomial with support
$\varphi(\calB)$, with the coefficient of $x^\beta$ in $g$ equal to the coefficient of $x^{\varphi(\beta)}$ in
$g\circ\Phi$.
Since $\varphi(\calB_i)=\calA_i$, this gives the natural identifications 
$\iota\colon\CC^{\calA_i}\xrightarrow{\,\sim\,}\CC^{\calB_i}$ and $\iota\colon\CC^\Adot\xrightarrow{\,\sim\,}\CC^\Bdot$
mentioned before the lemma.
Under this identification, we have $\iota(f)(\Phi(x))=f(x)$.

Since  $\MV(\Bdot)>1$, the branched cover $X_\Bdot\to\CC^\Bdot$ is nontrivial.
The identification $\iota\colon\CC^\Adot\to\CC^\Bdot$ extends to a commutative diagram
 \begin{equation}\label{Eq:Lacunary}
  \raisebox{-25pt}{\begin{picture}(97,58)(-4,0)
                          \put(35,52){\small$\iota\times\Phi$}  
    \put(0,45){$X_\Adot$} \put(24,49){\vector(1,0){47}} \put(73,45){$X_\Bdot$}
      \put(5,40){\vector(0,-1){27}}                    \put(78,40){\vector(0,-1){27}}
      \put(-4,25){\small$\pi$}                          \put(80,25){\small$\pi$}
                          \put(45,7){\small$\iota$}  
    \put(0,0){$\CC^\Adot$} \put(25,4){\vector(1,0){45}} \put(73, 0){$\CC^\Bdot$}
    \end{picture}}\ 
 \end{equation}
Here, $\iota\times\Phi$ is the restriction of the map
$\iota\times\Phi\colon \CC^\Adot\times(\CC^\times)^n \to \CC^\Bdot\times(\CC^\times)^n$ to $X_\Adot$.
The map $\iota\times\Phi\colon X_\Adot\to X_\Bdot$ is a map of branched covers with $\ker\Phi$ acting freely on the fibers.
If we restrict the diagram~\eqref{Eq:Lacunary} to the open subset $V$ of $\CC^\Bdot$ over which
$X_\Bdot\to\CC^\Bdot$ is a covering space, we obtain a composition of covering spaces with $\ker\Phi$ acting as deck
transformations on ${\pi^{-1}(V)}\subset X_\Adot$.
Thus $X_\Adot\to\CC^\Adot$ is decomposable.
\end{proof}

\subsubsection{Triangular support}\label{SSS:triangular}
This requires more discussion before we can state the analog of Lemma~\ref{L:Lacunary}.
Let us begin with an example when $n=3$.
Suppose that
\[
  \calA_1\ =\ \calA_2\ =\ \calA\ =\ 
  \begin{pmatrix} 0&1&1&1&2&2&2&3\\
                  0&0&1&2&0&1&2&1\\
                  0&1&2&3&2&3&4&4\end{pmatrix}
  \quad\mbox{and}\quad
 \calA_3\ =\
  \begin{pmatrix} 0&0&0&0&1&1\\
                  0&0&0&1&0&1\\
                  0&2&4&5&3&4\end{pmatrix}
\]
The span $\ZZ\calA$ of the first two supports is isomorphic to $\ZZ^2$, with 
$\varphi(a,b)^T\mapsto(a,b,a+b)^T$ an isomorphism $\varphi\colon\ZZ^2\xrightarrow{\,\sim\,}\ZZ\Adot$.
Set $\defcolor{\calB}:=\varphi^{-1}(\calA)$.
We display $\calA$, $\calA_3$, and $\calB$ in the horizontal plane together on the left below, and $\calB$ on the right. 
\[
  \begin{picture}(125,118)(-9,0)
    \put(0,0){\includegraphics{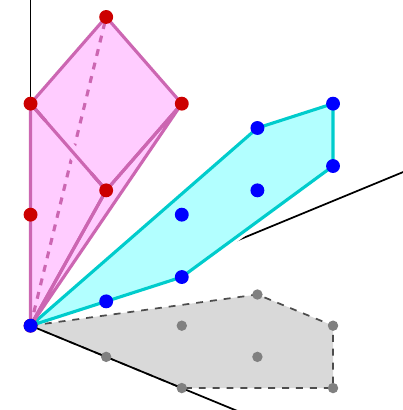}}
    \put(1,109){$z$} \put(32,2){$x$} \put(108,71){$y$}
    \put(-9,69){$\calA_3$}
    \put(60,17){$\calB$}
    \put(79,90){$\calA$}
  \end{picture}
    \qquad
  \begin{picture}(82,54)(-12,0)
    \put(0,0){\includegraphics{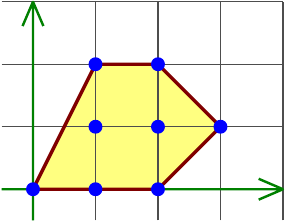}}
    \put(32,32){$\calB$}
  \end{picture}
\]
Consider the polynomial system $F=(f_1,f_2,f_3)\in\CC[x,y,z]$ with support $\Adot$,
\begin{align*}
  f_1 &=\ 1 + 2xz+3xyz^2+4xy^2z^3+5x^2z^2+6x^2yz^3+7x^2y^2z^4+8x^3yz^4\\
  f_2 &=\ 2 + 3xz+5xyz^2+7xy^2z^3+11x^2z^2+13x^2yz^3+17x^2y^2z^4+19x^3yz^4\\
  f_3 &=\ 1 + 3z^2 + 9 z^4 + 27 yz^5 + 81 xz^3 + 243 xyz^4\,.
\end{align*}
Let $\Phi\colon(\CC^\times)^3\to(\CC^\times)^2$ be given by $\Phi(x,y,z)=(xz,yz)=(u,v)$.
If 
\begin{align*}
  g_1 &=\ 1 + 2u+3uv+4uv^2+ 5u^2+ 6u^2v+ 7u^2v^2+ 8u^3v\\
  g_2 &=\ 2 + 3u+5uv+7uv^2+11u^2+13u^2v+17u^2v^2+19u^3v\,,
\end{align*}
then $f_i=g_i\circ\Phi$ for $i=1,2$.
To compute $\calV(F)$, we first may compute $\calV(g_1,g_2)$ which consists of eight points.
For each solution $(u_0,v_0)\in \calV(g_1,g_2)$, we may identify the fiber $\Phi^{-1}(u_0,v_0)$ with $\CC^\times$
by $z\mapsto (u_0z^{-1},v_0z^{-1},z)$.
Then the restriction of $f_3$ to this fiber is
\[
  1 + (3 + 81 u_0 + 243 u_0v_0)z^2 + (9 + 27 v_0 )z^4\,,
\]
which is a lacunary univariate polynomial with support $\{0,2,4\}$, and has four solutions (counted with multiplicity) when  
$v_0\neq -1/3$.

This example generalizes to all triangular systems.
Suppose  that $\Adot=(\calA_1,\dotsc,\calA_n)$ is triangular.
Let $\emptyset\neq I\subsetneq[n]$ be a proper subset witnessing the triangularity, so that 
$\rank(\ZZ\calA_I)=|I|$.
Set $\defcolor{J}:=[n]\smallsetminus I$.
Let
\[
  \defcolor{\ZZ^I}\ :=\ \QQ\calA_I\cap\ZZ^n\ =\
  \{ v\in\ZZ^n\mid \exists m\in\NN\mbox{ with }mv\in\ZZ\calA_I\}\,,
\]
be the saturation of $\ZZ\calA_I$, which is a free abelian group of rank $|I|$.
As it is saturated, $\defcolor{\ZZ_J}:=\ZZ^n/\ZZ^I$ is free abelian of rank $n-|I|=|J|$.

Applying $\Hom(\bullet,\CC^\times)$ to the short exact sequence
$\ZZ^I\hookrightarrow \ZZ^n\twoheadrightarrow\ZZ_J$ gives the short exact sequence of tori
(whose characters are $\ZZ_J$, $\ZZ^n$, and $\ZZ^I$) with indicated maps,
\begin{equation}
\label{Eq:Triangular}
  (\CC^\times)^{|J|}\simeq\defcolor{\TT_J}:=\Hom(\ZZ_J,\CC^\times)\
  \lhra\ (\CC^\times)^n\ \stackrel{\Phi}{\lthra}\
  \defcolor{\TT^I}:=\Hom(\ZZ^I,\CC^\times)\simeq(\CC^\times)^{|I|}\,.
\end{equation}
A  polynomial $f$ with support in $\ZZ^I$ determines polynomial functions on $(\CC^\times)^n$ and on $\TT^I$ with the first
the pullback of the second. 
Let $f$ be a polynomial on $(\CC^\times)^n$ with support $\calA\subset\ZZ^n$.
Then its restriction to a fiber $\Phi^{-1}(y_0)$ of $\Phi$ is a regular function $\overline{f}$ on the fiber, which
is a coset of $\TT_J$. 
Choosing an identification of $\TT_J\simeq\Phi^{-1}(y_0)$, we obtain a polynomial
\defcolor{$\overline{f}$} on  $\TT_J$ whose
support is the image \defcolor{$\overline{\calA}$} of $\calA$ in $\ZZ_J=\ZZ^n/\ZZ^I$.
This polynomial $\overline{f}$ depends upon the identification of the fiber with $\TT_J$.
Let $\defcolor{\overline{\calA_J}}$ be the image in $\ZZ_J$ of the collection $\calA_J$ of supports.
Then we have the product formula (see~\cite[Lem.~6]{ThSt} or~\cite[Thm.~1.10]{Esterov})
\begin{equation}\label{Eq:MVproduct}
  \MV(\Adot)\ =\ \MV(\calA_I)\cdot\MV(\overline{\calA_J})\,.
\end{equation}

Since $\Adot = \calA_I\sqcup\calA_J$, we have the identification $\CC^\Adot=\CC^{\calA_I}\oplus\CC^{\calA_J}$.
Suppose that $F(x)=0$ is a polynomial system with support $\Adot$.
Write $\defcolor{F_I}\in\CC^{\calA_I}$ for its
restriction to the indices in $I$, and the same for $F_J$.
We have the diagram
 \begin{equation}\label{Eq:Triangular_Decomposition}
  \raisebox{-25pt}{\begin{picture}(97,60)(-4,0)
                          \put(32,52){\small$p_I\times\Phi$}  
    \put(0,45){$X_\Adot$} \put(24,49){\vector(1,0){47}} \put(73,45){$X_{\calA_I}$}
      \put(5,40){\vector(0,-1){27}}                    \put(78,40){\vector(0,-1){27}}
      \put(-4,25){\small$\pi$}                          \put(80,25){\small$\pi$}
                          \put(44,8){\small$p_I$}  
    \put(0,0){$\CC^\Adot$} \put(25,4){\vector(1,0){45}} \put(73, 0){$\CC^{\calA_I}$}
    \end{picture}}\ .
 \end{equation}
Here, $p_I\times\Phi$ is the restriction of the map
$p_I\times\Phi\colon\CC^\Adot\times(\CC^\times)^n\to \CC^{\calA_I}\times\TT^I$ to $X_\Adot$.
 
Let $\defcolor{V_\Adot}\subset\CC^\Adot$ be the maximal Zariski open subset over which $X_\Adot$ is a covering space.
This is the set of polynomial systems $F(x)=0$ with support $\Adot$ which have exactly
$\MV(\Adot)$ solutions in $(\CC^\times)^n$.
Similarly, let $\defcolor{V_{\calA_I}}\subset\CC^{\calA_I}$ be the maximal Zariski open subset where $X_{\calA_I}\to\CC^{\calA_I}$ is a covering space.
We will show that under the projection $\CC^{\Adot}\to\CC^{\calA_I}$, the image of $V_\Adot$ is a subset of $V_{\calA_I}$.
Define $\defcolor{Y_{\Adot}}\to V_\Adot$ to be the restriction of $X_\Adot\to\CC^{\Adot}$ to the Zariski open set $V_\Adot$.
Also define $\defcolor{Y_{\calA_I}}\to V_\Adot$ to be the pullback of $X_{\calA_I}\to\CC^{\calA_I}$ along the map
$V_\Adot\to V_{\calA_I}$.
Write $\Phi\colon Y_\Adot\to Y_{\calA_I}$ for the map induced by $\Phi$.

\begin{lemma}\label{L:Triangular}
 Suppose that $\Adot$ is a triangular set of supports in $\ZZ^n$ witnessed by $I\subsetneq[n]$.
 Then $Y_\Adot\to Y_{\calA_I}\to V_{\Adot}$ a composition of covering spaces.
 If $1<\MV(\calA_I)<\MV(\Adot)$, then this decomposition is nontrivial, so that $X_\Adot\to\CC^\Adot$ is decomposable.

 Furthermore, each fiber of the map $Y_\Adot\to Y_{\calA_I}$ may be identified with the set of solutions to a polynomial
 system with support $\overline{\calA_J}$. 
\end{lemma}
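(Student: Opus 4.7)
The plan is to build the decomposition by exploiting the short exact sequence of tori in~\eqref{Eq:Triangular} and the product formula~\eqref{Eq:MVproduct}. The whole argument is essentially a counting argument controlled by Bernstein--Kushnirenko, together with the observation that the fibers of $\Phi$ are cosets of $\TT_J$ on which the polynomials $F_J$ restrict to have support $\overline{\calA_J}$.

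First I would establish that the projection $p_I\colon\CC^\Adot\to\CC^{\calA_I}$ sends $V_\Adot$ into $V_{\calA_I}$, so that the pullback $Y_{\calA_I}\to V_\Adot$ is well defined. For $F\in V_\Adot$, the system $F(x)=0$ has exactly $\MV(\Adot)$ solutions in $(\CC^\times)^n$. Applying $\Phi$ to these solutions lands them among the zeroes of $F_I$ in $\TT^I$, which number at most $\MV(\calA_I)$ by Proposition~\ref{prop:BKK}. For each such zero $y_0\in\TT^I$, the fiber $\Phi^{-1}(y_0)$ is a coset of $\TT_J$, and choosing an identification with $\TT_J$ realizes the restriction $\overline{F_J}$ as a polynomial system on $\TT_J$ with support $\overline{\calA_J}$; by Proposition~\ref{prop:BKK} again, it has at most $\MV(\overline{\calA_J})$ solutions. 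The product formula~\eqref{Eq:MVproduct} gives the total upper bound $\MV(\calA_I)\cdot\MV(\overline{\calA_J})=\MV(\Adot)$, and equality forces both intermediate counts to attain their maxima. In particular $F_I$ has exactly $\MV(\calA_I)$ solutions, so $F_I\in V_{\calA_I}$.

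Next I would assemble the factorization. Pulling back the covering space $X_{\calA_I}\to\CC^{\calA_I}$ along $p_I\colon V_\Adot\to V_{\calA_I}$ produces the covering $Y_{\calA_I}\to V_\Adot$ of degree $\MV(\calA_I)$. From the diagram~\eqref{Eq:Triangular_Decomposition}, the map $p_I\times\Phi$ factors $Y_\Adot\to V_\Adot$ through $Y_{\calA_I}$, giving the claimed factorization $Y_\Adot\to Y_{\calA_I}\to V_\Adot$. The preceding count shows that over a point $(F,y_0)\in Y_{\calA_I}$, the fiber of $Y_\Adot\to Y_{\calA_I}$ consists of the $x\in \Phi^{-1}(y_0)$ satisfying $F_J(x)=0$, which after identifying the coset $\Phi^{-1}(y_0)$ with $\TT_J$ is precisely the solution set to a polynomial system of support $\overline{\calA_J}$; since equality was forced in the counting argument, each such fiber has exactly $\MV(\overline{\calA_J})$ points. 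This makes $Y_\Adot\to Y_{\calA_I}$ an \'etale, hence covering, map of constant degree $\MV(\overline{\calA_J})$, and also verifies the concluding fiber description. Nontriviality of both stages is immediate from the hypothesis $1<\MV(\calA_I)<\MV(\Adot)$ combined with~\eqref{Eq:MVproduct}, so $X_\Adot\to\CC^\Adot$ is decomposable by Proposition~\ref{P:DecomposableIsImprimitive}.

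The main obstacle I expect is the first step: the inclusion $p_I(V_\Adot)\subseteq V_{\calA_I}$ is the technical heart, because it requires turning the combinatorial product formula into a statement about when generic fibers are attained. The identification of $\Phi$-fibers with $\TT_J$ requires a choice (no canonical section of~\eqref{Eq:Triangular} exists in general), but as noted before the lemma the resulting polynomial system on $\TT_J$ has support $\overline{\calA_J}$ independent of the choice, so $\MV(\overline{\calA_J})$ is the right invariant to plug into~\eqref{Eq:MVproduct}; once the counting is done, the covering-space and fiber statements follow cleanly.
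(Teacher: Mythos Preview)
Your argument is essentially the same as the paper's: both use the Bernstein--Kushnirenko bound on the subsystem $F_I$ and on each fiber system $\overline{F_J}$, combine with the product formula~\eqref{Eq:MVproduct} to force equality everywhere, and thereby obtain $p_I(V_\Adot)\subseteq V_{\calA_I}$ together with the constant fiber cardinality $\MV(\overline{\calA_J})$. The only superfluous step is your closing appeal to Proposition~\ref{P:DecomposableIsImprimitive}: once you have exhibited the nontrivial factorization $Y_\Adot\to Y_{\calA_I}\to V_\Adot$, decomposability holds by definition, with no detour through imprimitivity needed.
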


\begin{proof}
  Let $F\in V_{\Adot}$.
  Then its number of solutions is $\#\calV(F)=\MV(\Adot)$.
  If $x\in\calV(F)$, then $\Phi(x)\in\TT^I$ is a solution to $f_i=0$ for $i\in I$.
  Thus $\Phi(\calV(F))\subset\calV(F_I)$, the latter being the solutions to $F_I(x)=0$ on $\TT^I$.
  For any $y\in\calV(F_I)$, if we choose an identification $\TT_J \simeq\Phi^{-1}(y)$ of the fiber, 
  then the restriction of $F$ to $\Phi^{-1}(y)$ is the system $\defcolor{\overline{F_J}}=\{\overline{f_j}\mid j\in J\}$.
  By the Bernstein-Kushnirenko Theorem, this has at most $\MV(\overline{\calA_J})$ solutions.
  By the product formula~\eqref{Eq:MVproduct} and our assumption on $\#\calV(F)$, we conclude that
  the system $F_I(x)=0$ has $\MV(\calA_I)$ solutions, and for each $y\in\calV(F_I)$, the system $\overline{F_J}$ has 
  $\MV(\overline{\calA_J})$ solutions.

  In particular, this implies that the image of $V_{\Adot}$ in $\CC^{\calA_I}$ is a subset of $V_{\calA_I}$.
  As  $V_{\Adot}$ is open and dense in $\CC^\Adot$, its image contains an open dense subset.
  This proves the assertion that  $Y_\Adot\to Y_{\calA_I}\to V_{\Adot}$ is a decomposition of covering spaces.
  We have already shown that each fiber  of the map $Y_\Adot\to Y_{\calA_I}$ is a polynomial system with support
  $\overline{\calA_J}$ with exactly $\MV(\overline{\calA_J})$ solutions.
  Thus when $1<\MV(\calA_I)<\MV(\Adot)$, we have $\MV(\overline{\calA_J})>1$, which shows that this
  decomposition is nontrivial.
\end{proof}

\subsection{Computing the Decompositions}\label{S:SNF}
We show how to compute the decompositions of $X_\Adot\to\CC^{\Adot}$ from 
Section~\ref{SS:Decompositions} when $\Adot$ is either lacunary or strictly triangular.

Let $\calA=\{0,\alpha_1,\dotsc,\alpha_m\}\subset\ZZ^n$ be a collection of integer vectors.
The subgroup $\ZZ\calA\subset\ZZ^n$ that it generates is the image of a $\ZZ$-linear map $\ZZ^m\to\ZZ^n$ and is
represented by a $n\times m$ integer matrix \defcolor{$A$} whose columns are the vectors $a_i$.
Suppose that $\ZZ\calA$ has rank $k$.
A \demph{Smith normal form} of $A$ is a factorization into integer matrices
 \begin{equation}\label{Eq:SNF}
  A\ =\ PDQ\,,
 \end{equation}
where $P\in\GL_n(\ZZ)$ and $Q\in\GL_m(\ZZ)$ are invertible,
and $D$ is the rectangular matrix whose only nonzero entries are $d_1,\dotsc,d_k$ along the diagonal of its principal 
$k\times k$ submatrix.
These are the  \demph{invariant factors} of $A$ and they satisfy $d_1|d_2|d_3|\dotsb|d_k$.

The subgroup $\ZZ\calA\subset\ZZ^n$ has a basis given by the columns of the matrix $PD$.
If we apply the coordinate change $P^{-1}$ to $\ZZ^n$, then $\ZZ\calA$ becomes the subset of the coordinate space
$\ZZ^k\oplus\bzero^{n-k}$ given by $d_1\ZZ\oplus d_2\ZZ\oplus\dotsb\oplus d_k\ZZ\oplus\bzero^{n-k}$.

Let us consider a Smith normal form~\eqref{Eq:SNF} when $\calA$ is the collection of vectors in $\Adot$ and
$\MV(\Adot)>0$. 
Then $d_n> 0$ as $\ZZ\Adot$ has rank $n$, and $\Adot$ is lacunary when $d_n> 1$.
In this case, an identification $\varphi\colon\ZZ^n\xrightarrow{\,\sim\,}\ZZ\calA$ is given by $PD_n$, where
\defcolor{$D_n$} is the principal $n\times n$ submatrix of $D$.
Recall from~\S~\ref{SS:CoordinateChanges} that the 
corresponding surjection $\varphi^*=\Phi\colon(\CC^\times)^n\to(\CC^\times)^n$ has kernel
$\Hom(\ZZ^n/\ZZ\Adot,\CC^\times)$.
Let $\defcolor{\psi}:=P^{-1}$.
Then $\psi\circ\varphi=D_n$, so that if we set $\defcolor{\Psi}:=\psi^*$, then
$\Phi\circ\Psi\colon(\CC^\times)^n\to(\CC^\times)^n$  is diagonal, 
 \begin{equation}\label{Eq:diagonal}
  \Phi\circ\Psi(x_1,\dotsc,x_n)\ =\ (x_1^{d_1},\dotsc,x_n^{d_n})\,.
 \end{equation}
Let $y=(y_1,\dotsc,y_n)\in(\CC^\times)^n$.
If we set $\defcolor{\rho_i}:=|y_i|$ and $\defcolor{\zeta_i}:=\arg(y_i)$ so that $y_i=\rho_i e^{\sqrt{-1}\zeta_i}$, then 
$(\Phi\circ\Psi)^{-1}(y)$ is the set
 \begin{equation}\label{Eq:PhiInv}
    \left\{ \left(\rho_1^{1/d_1}e^{\sqrt{-1}\theta_1}\,,\,\dotsc\,,\,\rho_n^{1/d_n}e^{\sqrt{-1}\theta_n}\right)
   \,\middle\vert\,  \theta_i = \tfrac{\zeta_i{+}2\pi j}{d_i}\mbox{ for } j=0,\dotsc,d_i{-}1\right\}.\vspace{3pt}
 \end{equation}
This ends the discussion of lacunary sparse polynomial systems.


Suppose that $\Adot$ is triangular, and let us use the notation of \S~\ref{SSS:triangular}.
We suppose that $I=[k]=\{1,\dotsc,k\}$ and $J=\{k{+}1,\dotsc,n\}$.
Given a polynomial $f$ on $(\CC^\times)^n$, its restriction $\overline{f}$ to a fiber of $\Phi\colon(\CC^\times)^n\to\TT^I$
is a regular function on the fiber, which is isomorphic to $\TT_J$.
To represent $\overline{f}$ as a polynomial on $\TT_J$ depends on the choice of a point in that fiber. 
Indeed, suppose that $f=\sum_{\alpha\in\calA}c_\alpha x^\alpha$.
Let $y\in\TT^I$ and $y_0\in\Phi^{-1}(y)$ be a point in
the fiber above $y$, so that $\TT_J\ni z\mapsto y_0z\in\Phi^{-1}(y)$ parameterizes $\Phi^{-1}(y)$.
If we write \defcolor{$\overline{\alpha}$} for the image of $\alpha\in\ZZ^n$ in $\ZZ_J=\ZZ^n/\ZZ^I$, then 
 \begin{equation}\label{Eq:substituteFibre}
   \overline{f}(z)\ =\ \sum_{\alpha\in\calA} c_\alpha (y_0z)^\alpha\
    =\ \sum_{\beta\in\overline{\calA}} z^\beta \ 
   \biggl(\,\sum_{\alpha\in\calA\ \mbox{\scriptsize with}\ \overline{\alpha}=\beta} c_\alpha y_0^\alpha\biggr)\,.
 \end{equation}
A uniform choice of a point in each fiber is given by a splitting
$\TT^I\hookrightarrow(\CC^\times)^n$ of the map $\Phi\colon(\CC^\times)^n\twoheadrightarrow\TT^I$.
This gives an identification $(\CC^\times)^n=\TT^I\times\TT_J$.
Then points $y\in\TT^I$ are canonical representatives of cosets of $\TT_J$.
As $k=|I|$, we may further fix isomorphisms $\TT^I\simeq(\CC^\times)^k$ giving $\ZZ^I\simeq\ZZ^k$ and
$\TT_J\simeq(\CC^\times)^{n-k}$ giving $\ZZ_J\simeq\ZZ^{n-k}$.


Suppose now that $\calA=\calA_I$, and we compute a decomposition~\eqref{Eq:SNF}.
Since $\ZZ\calA_I$ has rank $k$, the diagonal matrix $D$ has $k$ nonzero invariant factors.
The saturation $L$ of  $\ZZ\calA_I$ is the image of $P\IK_k$, where $\IK_k$ is the $n\times n$
matrix whose only nonzero entries are in its principal $k\times k$ submatrix, which forms an identity matrix.
Then $\varphi=P\IK_k$ and $\Phi=\varphi^*$.
Applying the coordinate change $\defcolor{\psi}:=P^{-1}$ to $\ZZ^n$ identifies this saturation as the coordinate plane 
$\ZZ^k\oplus\bzero^{n-k}$ and the free abelian group $\ZZ\calA_I$ as $d_1\ZZ\oplus d_2\ZZ\oplus\dotsb\oplus d_k\ZZ\oplus\bzero^{n-k}$.
As in Section~\ref{SSS:triangular}, this identifies $\ZZ/L$ with the complementary coordinate plane,
$\bzero^k\oplus\ZZ^{n-k}$.
Setting $\defcolor{\Psi}:=\psi^*$ , the composition $\Phi\circ\Psi$ is the projection to the first $k$ coordinates,
 \begin{equation}\label{Eq:projection}
  \Phi\circ\Psi\;\colon\; (\CC^\times)^n\ \lthra\ (\CC^\times)^k
 \end{equation}
and we identify $\TT_J=1^k\times (\CC^\times)^{n-k}$ and 
$\TT^I=(\CC^\times)^k\times 1^{n-k}$.

\section{Algorithms for Solving Sparse Decomposable Systems}\label{S:Algorithm}
We describe algorithms for solving sparse decomposable systems, and suggest an application of these algorithms to computing
a start system to solve general systems (not necessarily decomposable) of sparse polynomials.
They are based on numerical homotopy continuation~\cite{Morgan}.
By ``solve a system of polynomials'',
 we mean compute numerical approximations to the complex isolated solutions which may then be refined using 
Newton iterations.
The expected numbers of isolated solutions to the systems we consider are mixed volumes as explained in the
Bernstein-Kushnirenko Theorem.
In principle, as the systems are square and we know the expected number of isolated solutions, Smale's $\alpha$-theory~\cite{Smale}
enables approximations to solutions to be certified as approximate solutions in that Newton iterations converge
quadratically to solutions, as explained in~\cite{HS12}.

Let $\Adot$ be a collection of supports with $\MV(\Adot)>0$, so that $\pi\colon X_\Adot\to\CC^\Adot$ is a branched cover
and let $F \in \CC^\Adot$.
A \demph{start system} for $\Adot$ is a pair $(G,\calV(G))$ where $G\in\CC^\Adot$ and $\calV(G)$ consists of 
$\MV(\Adot)$ distinct points.
The convex combination of systems 
 \begin{equation}\label{Eq:parameter}
  \defcolor{H(t)}\ =\ tF + (1-t)G\qquad\mbox{for }t\ \in\ \CC\,,
 \end{equation}
is a straight-line homotopy.
Then $\calV(H(t))\subset(\CC^\times)^\Adot\times\CC$ 
defines a curve $C$.
Forgetting the $x$-coordinates gives a dominant map $C\to\CC$.
Restricting to points above $t\in[0,1]$ gives a family of arcs on $C$.
Starting with the points of $\calV(G)$ at $t=0$, path-tracking along these arcs using $H(t)$ will give
isolated solutions to $\calV(F)$ at $t=1$ when $F$ is a regular value of $\pi$.
This is an instance of a \demph{(parameter) homotopy}~\cite{LSY89,MS89}.
If $F$ is not a regular value but $\calV(F)$ is still finite, then $\calV(F)$ may be computed using
endgames~\cite{BHS11,HV98}. Problems of numerically tracking solutions are treated in \cite{Morgan,SW05}.

\subsection{Solving decomposable sparse systems}\label{ss:sdss}
We describe algorithms that use Esterov's conditions to solve a decomposable sparse polynomial system.
In each, we let \defcolor{$\texttt{SOLVE}$} be an arbitrary algorithm for solving a polynomial system.
We assume that it is known that the system $F(x)=0$ to be solved is 
general given its support $\Adot$ in that it has $\MV(\Adot)$  solutions in $(\CC^\times)^n$.
If not, then one instead solves a polynomial system with support $A_\bullet$ whose coefficients are random complex numbers.
With probability one, this system is generic and one may use a homotopy together with endgames 
 to compute the isolated solutions to $\calV(F)$.

Our main algorithm (Algorithm~\ref{alg:SDS})  takes a sparse system and checks Esterov's criteria for decomposability.  
If the system is decomposable, the algorithm calls Algorithm~\ref{alg:Lacunary} (if lacunary) or
Algorithm~\ref{alg:Triangular} (if triangular), and in each of these algorithms calls to the solver $\texttt{SOLVE}$ are assumed to
be recursive calls back to Algorithm~\ref{alg:SDS}.
If the polynomial system is indecomposable, then Algorithm~\ref{alg:SDS}
calls a black box solver $\defcolor{\texttt{BLACKBOX}}$.

Recall from Section~\ref{SS:CoordinateChanges}  the relation between the linear map $\varphi=\Phi^*$ and the group
homomorphism  $\Phi=\varphi^*$.  
Furthermore, recall the identification $\iota$ in \eqref{Eq:Lacunary}.

\begin{algorithm}[SolveLacunary]\ 
\label{alg:Lacunary}

{\bf Input:}  A general polynomial system $F(x)=0$ whose support $\Adot$ is  lacunary.

{\bf Output:} All isolated solutions $\calV(F)\subset (\CC^\times)^n$.

{\bf Do:}
\begin{enumerate}
\item  Compute a Smith normal form \eqref{Eq:SNF} of $\Adot$, giving  $\varphi=PD_n$, $\Phi=\varphi^*$, $\psi = P^{-1}$, and $\Psi=\psi^*$,
  so that $\Phi\circ\Psi$ is diagonal~\eqref{Eq:diagonal}.
\item Use $\texttt{SOLVE}$ to compute isolated solutions of $\iota(F)(x)=0$ in $(\CC^\times)^n$.

\item Using the formula~\eqref{Eq:PhiInv} to compute $(\Phi\circ\Psi)^{-1}(y)$ for $y\in\calV(\iota(F))$, return
  \[
    \{ \Psi(w) \mid w\in \bigcup_{y \in\calV(\iota(F))} (\Phi\circ\Psi)^{-1}(y)\}\,.
  \]
\end{enumerate}
\end{algorithm}
\begin{proof}[Proof of Correctness.]
  By Lemma~\ref{L:Lacunary}, $\calV(F)=\Phi^{-1}(\calV(\iota(F)))$.
  We apply $\Psi$ to points of $(\Phi\circ\Psi)^{-1}(y)$ for $y\in\calV(\iota(F))$ to obtain
  points of $\calV(F)$ in their original coordinates.
\end{proof}

Recall the notation ${F_I}\in\CC^{\calA_I}$ used in \eqref{Eq:Triangular_Decomposition}.

\begin{algorithm}[SolveTriangular]\ 
\label{alg:Triangular}

{\bf Input:}  A general polynomial system $F(x)=0$ whose support $\Adot$ is triangular, witnessed by $0<k<n$ such that 
 $\rank(\ZZ\calA_{[k]})=k$.

{\bf Output:} All isolated solutions to $\calV(F)\subset(\CC^\times)^n$.

{\bf Do:}
\begin{enumerate}
\item  Compute a Smith normal form \eqref{Eq:SNF} of $\calA_{[k]}$, giving $\varphi=P\IK_k$, $\Phi=\varphi^*$, $\psi = P^{-1}$, and
  $\Psi=\psi^*$, so that  $\Phi\circ\Psi$ is the projection~\eqref{Eq:projection}.
  
\item Use $\texttt{SOLVE}$ to compute isolated solutions to $F_{[k]}(x)=0$ in $(\CC^\times)^k$.

\item Choose $y_0\in\calV(F_{[k]})$.
  Use $\texttt{SOLVE}$ to compute the points of the fiber $(\Phi\circ\Psi)^{-1}(y_0)$ in $Y_{\Adot}$,
  which are $\calV(\overline{F_J})\subset\{y_0\}\times(\CC^\times)^{n-k}$, where $\overline{F_J}(x)=0$ has support
  $\overline{\calA_J}$ and $J:=[n]\smallsetminus[k]$. 
 
\item For each $y\in\calV(F_{[k]})$ use a homotopy~\eqref{Eq:parameter} with start system
  $\calV(\overline{F_J})$ to compute $(\Phi\circ\Psi)^{-1}(y)$ and return
  \[
    \{ \Psi(w) \mid w\in \bigcup_{y \in\calV(F_{[k]})} (\Phi\circ\Psi)^{-1}(y)\}\,.
  \]
\end{enumerate}
\end{algorithm}
\begin{proof}[Proof of Correctness.]
  By Lemma~\ref{L:Triangular}, every solution $x\in\calV(F)$ lies over a solution $y=\Phi(x)$ to $F_{[k]}(x)=0$ in
  $(\CC^\times)^k$.
  As explained in Section~\ref{S:SNF}, the map $\Phi\circ\Psi$ is a coordinate projection and
  $(\Phi\circ\Psi)^{-1}(y)=\calV(\overline{F_J})$.
  Here, $\overline{F_J}=(\overline{f_{k+1}},\dotsc,\overline{f_n})$ where $\overline{f_j}$ has support $\overline{\calA_j}$
  and is computed using~\eqref{Eq:substituteFibre}.
  We apply $\Psi$ to convert these points to the original coordinates.  
\end{proof}

The previous two algorithms handle decomposable systems that are lacunary or triangular. 
We now state our main algorithm and later illustrate it in detail for a decomposable support $\Adot$ in Example~\ref{Ex:computation1}.  
We remark that our methods can be used as a preprocessing step for a black box solver.

\begin{algorithm}[SolveDecomposable]\ 
\label{alg:SDS}

{\bf Input:} A generic polynomial system $F(x)=0$ with support $\Adot$.

{\bf Output:} All isolated solutions to $\calV(F)\subset (\CC^\times)^n$.

{\bf Do:}
\begin{enumerate}
\item Compute a Smith normal form $PDQ$~\eqref{Eq:SNF} of $\Adot$.\newline
  {\bf if} $d_n>1$, then {\bf return} SolveLacunary$(F)$.\newline
  {\bf if} $d_n=1$, then

   {\bf for all} $\emptyset\neq I\subsetneq [n]$
    compute a Smith normal form  $PD_IQ$~\eqref{Eq:SNF} of $\calA_I$.

 \item  \quad  \quad {\bf if} $\rank(D_I)=|I|$ for some $I$, reorder so that $I=[k]$ and 

   \quad  \quad  \quad  {\bf return} SolveTriangular$(F,k)$.
    
\item  \quad  \quad {\bf else} neither of Esterov's conditions hold and {\bf return} $\texttt{BLACKBOX}(F)$.
\end{enumerate}
\end{algorithm}
\begin{proof}[Proof of Correctness.]
  First note that if the algorithm halts, then it returns the isolated solutions of $F(x)=0$.
  Halting is clear in Case (3), but the other cases involve recursive calls back to  Algorithm~\ref{alg:SDS}.
  In Case (1), SolveLacunary will call Algorithm~\ref{alg:SDS} on a system $\iota(F)(x)=0$ whose mixed volume is
  less than $\MV(\Adot)$.
  In Case (2), SolveTriangular  will call Algorithm~\ref{alg:SDS} on systems $F_{[k]}(x)=0$ and $\overline{F_J}(x)=0$, each involving
  fewer variables than $F(x)=0$.
  Thus, in each recursive call back to  Algorithm~\ref{alg:SDS}, either the mixed volume or the number of variables
  decreases, which proves that the algorithm halts.
\end{proof}

\subsection{Start Systems}

The B\'ezout homotopy~\cite{Garcia79} is a well-known  homotopy for solving a system $F=(f_1,\dotsc,f_n)=0$ 
where each $f_i$ is a general polynomial of degree $d_i$.
In it, the start system is $G=(x_1^{d_1}-z_1,\dotsc,x_n^{d_n}-z_n)$, and $\calV(G)=\Phi^{-1}(z)$, where $\Phi$ is the
diagonal map~\eqref{Eq:diagonal} and $\Phi^{-1}(z)$ is determined by inspection from~\eqref{Eq:PhiInv}. This start system is a highly decomposable sparse polynomial system consisting of supports which are subsets of the original support of $F(x)=0$, but have the same mixed volume.
We propose a generalization, in which Algorithm~\ref{alg:SDS} is used to compute a start system.

\begin{example}\label{Ex:start}
  Suppose that we have supports $\calA_1=\calA_2=\calA$, which are given by the columns of the matrix
  $(\begin{smallmatrix}0&0&1&1&2&3&3&3&4&5&5&6\\0&2&0&1&3&0&1&4&2&3&4&4\end{smallmatrix})$.
  Then $\MV(\calA_1,\calA_2)=\vol(\conv(\calA))=30$.
  Let $\calB_1=\calB_2=(\begin{smallmatrix}0&0&3&3&6\\0&2&0&4&4\end{smallmatrix})$\vspace{1pt} be the set of vertices
  of $\conv(\calA)$.
  Given a general $F\in\CC^\Adot$, let $G\in\CC^\Bdot\subset\CC^\Adot$ be obtained from $F$ by restriction to the
  monomials in $\calB$.
  (That is, we set coefficients of monomials $x^\alpha$ in $F$ to zero if $\alpha\not\in\calB$.)
  Then $\Bdot$ is lacunary with the map $\Phi(x_1,x_2)=(x_1^3,x_2^2)$, and $\iota(G)$ has five solutions, say $(z_i,w_i)\in \mathbb{C}^2$ for $i=1,2,..,5$.
  The left, center, and right figures give the support of the polynomials
appearing in the systems $F=0$, $G=0$, and $\iota(G)=0$ respectively.
(The blue and red dots correspond to monomials with a nonzero coefficient.)
    \[
   \includegraphics{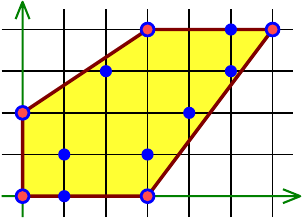}\quad\quad
   \includegraphics{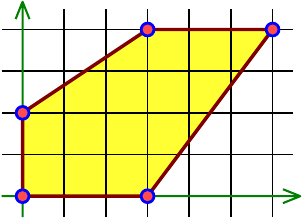}\quad\quad
   \includegraphics{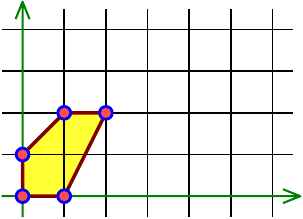} 
  \]

  We may use Algorithm~\ref{alg:SDS} (more specifically, Algorithm~\ref{alg:Lacunary}) to compute $\calV(G)$. 
 \newcommand{\thirdrootz}{\sqrt[\leftroot{-1}\uproot{2}\scriptstyle 3]{z_i}}
\newcommand{\sqrtw}{\sqrt{w_i}}
The thirty solutions to $G=0$ are
\[
(\thirdrootz,\sqrtw),\;(\eta \thirdrootz,\sqrtw),\;( \eta^2 \thirdrootz,\sqrtw),\;
(\thirdrootz,-\sqrtw),\;      ( \eta \thirdrootz,-\sqrtw),\;(\eta^2 \thirdrootz,-\sqrtw)
\] 
for $i=1,2,..,5$ and where $\eta$ is a primitive third root of unity. 
We compute the isolated solutions of $F(x)=0$ using the   homotopy
  \begin{equation}\label{Eq:SLH}
    H(t)\ =\ t F\ +\ (1-t)G
  \end{equation}
  with start system $G=H(0)$ and tracking from the thirty solutions $\calV(G)$ at $t=0$.
  \hfill$\diamond$ 

\end{example}


Example~\ref{Ex:start} motivates our final algorithm.
For a finite set $\calA\subset\ZZ^n$, let $\defcolor{v(\calA)}\subset\calA$ be the subset of vertices of $\conv(\calA)$. 
For a collection $\Adot=(\calA_1,\dotsc,\calA_n)$ of supports, let $\defcolor{v(\Adot)}:=(v(\calA_1),\dotsc,v(\calA_n))$.
Note that if $G \in \CC^{v(\Adot)}$ is a regular value of the branched cover
$\pi|_{X_{v(\Adot)}}\colon X_{v(\Adot)} \to \CC^{v(\Adot)}$ then $G$ is also a regular value of
$\pi\colon X_\Adot \to \CC^\Adot$.
As such, $G(x)=0$ may be taken as a start system for the   homotopy~\eqref{Eq:SLH} and may be used to compute $\calV(F)$
for any $F\in\CC^{\Adot}$ with $\calV(F)$ finite.
The benefit of this approach, as seen in Example~\ref{Ex:start}, is that $\pi|_{X_{v(\Adot)}}$ is more likely than $\pi$ to
be lacunary.

\begin{algorithm}[Decomposable Start System]\ 
\label{alg:SDSS}

{\bf Input:} A set $\Adot$ of supports.

{\bf Output:} A start system $(G,\calV(G))$ for $\Adot$.

{\bf Do:}
\begin{enumerate}
  \item Choose a general system $G\in\CC^{v(\Adot)}$.
  \item Compute $\calV(G)$ using Algorithm~\ref{alg:SDS}.
  \item {\bf return} the pair $(G,\calV(G))$.
\end{enumerate}

\end{algorithm}
\begin{proof}[Proof of Correctness.]
  As $G\in\CC^{v(\Adot)}$ is  general, it has $\MV(v(\Adot))$ solutions.
  Since for each $i$, $\conv(\calA_i)=\conv(v(\calA_i))$, we have $\MV(v(\Adot))=\MV(\Adot)$.
  Finally, $\CC^{v(\Adot)}$ is the subspace of $\CC^{\Adot}$ where the coefficients of nonextreme monomials in each
  polynomial are zero.
  Thus $G\in\CC^{\Adot}$, which shows that $(G,\calV(G))$ is a start system for $\Adot$.
\end{proof}

\begin{remark}\label{R:more}
  The B\'ezout homotopy motivated the idea behind Algorithm~\ref{alg:SDSS}.
  However, if we apply Algorithm~\ref{alg:SDSS} to the system of supports $\Adot$, where $\calA_i$ is all monomials of
  degree at most $d_i$, then we will not get the start system for the B\'ezout homotopy.
  For example, when $n=2$, $d_1=2$, and $d_2=3$, the supports are  as shown.
  \[
   \begin{picture}(39,51)(0,-12)
      \put(0,0){\includegraphics{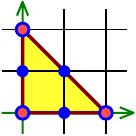}}
      \put(12,-11){$\calA_1$}
   \end{picture}
    \qquad
   \begin{picture}(51,63)(0,-12)
      \put(0,0){\includegraphics{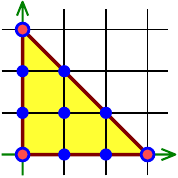}}
      \put(18,-11){$\calA_2$}
   \end{picture}\qquad
   \begin{picture}(39,51)(0,-12)
      \put(0,0){\includegraphics{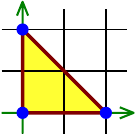}}
      \put(6,-11){$v(\calA_1)$}
   \end{picture}
    \qquad
   \begin{picture}(51,63)(0,-12)
      \put(0,0){\includegraphics{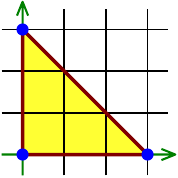}}
      \put(11,-11){$v(\calA_2)$}
   \end{picture}\qquad
   \begin{picture}(39,30)(0,-12)
      \put(0,0){\includegraphics{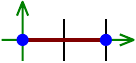}}
      \put(13,-11){$\calB_1$}
   \end{picture}
    \qquad
   \begin{picture}(18,63)(0,-12)
      \put(0,0){\includegraphics{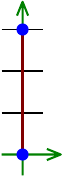}}
      \put(3,-11){$\calB_2$}
   \end{picture}
 \]
 Here, $\calB_1$ and $\calB_2$ are the supports of the start system for the B\'ezout homotopy.

 We leave open the challenge of finding a simple, general method to replace each set $\calA_i$ by a subset
 $\calB_i$ of $v(\calA_i)$, so that $\MV(\Adot)=\MV(\Bdot)$ and $\pi\colon X_{\Bdot} \to \CC^\Bdot$ is decomposable.
 A possible first step would be to refine the methods of~\cite{Chen}.
 This may lead to a simpler start system for a   homotopy to solve general systems with support $\Adot$.
\hfill$\diamond$
\end{remark}

\section{A computational experiment}\label{S:computations}
We explored the computational cost of using Algorithm~\ref{alg:SDS} to solve sparse decomposable systems, comparing
timings to PHCPack~\cite{V99,PHC_M2} on a family of related systems.

Let $\calA_1=(\begin{smallmatrix}0&1&2&0&1\\0&0&0&1&1\end{smallmatrix})$, 
$\calA_2=(\begin{smallmatrix}1&0&1&2&1\\0&1&1&1&2\end{smallmatrix})$, 
$\calB_1=(\begin{smallmatrix}0&2&0&2\\0&0&1&3\end{smallmatrix})$, and 
$\calB_2=(\begin{smallmatrix}0&1&2&0&2&0\\0&0&0&1&1&2\end{smallmatrix})$.
We display these supports and their convex hulls below.
\[
  \calA_1\ \raisebox{-11pt}{\includegraphics{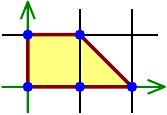}}\qquad
  \calA_2\ \raisebox{-18.5pt}{\includegraphics{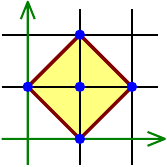}}\qquad\qquad
  \calB_1\ \raisebox{-26pt}{\includegraphics{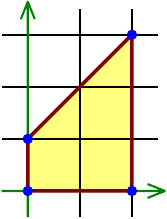}}\qquad
  \calB_2\ \raisebox{-18.5pt}{\includegraphics{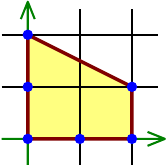}}
\]
Let $\defcolor{\calC}:=\{0,1\}^5$ be the vertices of the five-dimensional cube.
We construct sparse decomposable systems from $\defcolor{\Adot}:=(\calA_1,\calA_2)$,
$\defcolor{\Bdot}:=(\calB_1,\calB_2)$, and $\calC$ as follows.

Choose two injections $\defcolor{\imath},\defcolor{\jmath}\colon\ZZ^2\to\ZZ^5$ such that
$\imath(\ZZ^2)\cap\jmath(\ZZ^2)=\{0\}$.
For example, choose four linearly independent vectors $\imath_1,\imath_2,\jmath_1,\jmath_2\in\ZZ^5$, and define
$\imath(a,b)=a\imath_1+b\imath_2$, and the same for $\jmath$.
Let us set
\[
  \defcolor{\calA(\imath,\jmath)}\ :=\
  \bigl(\imath(\calA_1)\,,\,\imath(\calA_2)\,,\ \jmath(\calB_1)\,,\,\jmath(\calB_2)\,,\ \calC\bigr)\,.
\]

%
%
%
\begin{example}\label{Ex:computation1}
We now illustrate Algorithm~\ref{alg:SDS} in detail on $\calA(\imath,\jmath)$ by considering the case when
$\imath_1,\imath_2,\jmath_1,\jmath_2$ are the first four standard unit vectors $e_1,\ldots,e_4$.  
  Suppose $F = (f_1,f_2,g_1,g_2,h)=0$ is a system of polynomials $\mathbb{C}[x_1,x_2,y_1,y_2,z]$ with support
  $\mathcal A(e_1,e_2,e_3,e_4)$.  
  We use superscripts to distinguish different calls of the same algorithm.
  When \texttt{SolveDecomposable}$^{(1)}(F)$ is called, it first checks if $F$ is lacunary (it is not as $\ZZ\calC=\ZZ^5$),
  and then recognizes that $F$ is triangular witnessed by $(f_1,f_2)$.
  As such, it calls \texttt{SolveTriangular}$^{(1)}(F,2)$ which computes the $\MV(\Adot)=5$ solutions $p_1,\ldots,p_5$ to
  $\calV(f_1,f_2)$ with PHCPack, our choice of \texttt{BLACKBOX}.

  As its penultimate task, \texttt{SolveTriangular}$^{(1)}$ computes a fiber of the first solution $p_1$ by
  performing the substitution $(x_1,x_2)=p_1$ in $g_1,g_2$ and $h$, and recursively calls
  \texttt{SolveDecomposable}$^{(2)}$ on the system $(g_1(p_1,y,z),g_2(p_1,y,z),h(p_1,y,z))\in\CC[y_1,y_2,z]$.
  This system is recognized to be triangular witnessed by $(g_1,g_2)$ and \texttt{SolveTriangular}$^{(2)}(g_1,g_2)$
  computes the $\MV(\Bdot)=10$ solutions $q_1,\ldots,q_{10}$ using PHCPack.
  Next, \texttt{SolveTriangular}$^{(2)}$ computes a fiber above $q_1$ by performing  the substitution
  $y=(y_1,y_2)=q_1$ in $h(p_1,y,z)$ producing the univariate polynomial $h(p_1,q_1,z)$ of degree $1$ which
  has solution $(p_1,q_1,z_1)$.
  Finally, \texttt{SolveTriangular}$^{(2)}$ performs a   homotopy from $q_1$ to $q_i$ to populate the fibers above
  each $q_i$.
  Thus \texttt{SolveTriangular}$^{(1)}$ populates the fiber above $p_1$ consisting of $10 \cdot 1=10$ solutions.
  As its final step, \texttt{SolveTriangular}$^{(1)}$  uses homotopies  to take $p_1$ to $p_i$ to populate all
  fibers producing all $5 \cdot 10 = 50$ solutions, $\calV(F)$.\hfill$\diamond$
\end{example}

The overhead of this algorithm includes computing Smith normal forms and the search for subsets witnessing
triangularity.
Additionally, it often requires more path-tracking than a direct use of PHCPack.
Nonetheless, the overhead seems to be nominal, and compared to the paths tracked in PHCPack, the paths tracked in our
algorithm either involve fewer variables or  polynomials of smaller degree. 

For example, in Example \ref{Ex:computation1}, our algorithm called PHCPack to solve two sparse polynomial systems with $5$
and $10$ solutions respectively.
A   homotopy was called $10-1=9$ times on a system with $1$ solution, then a different  homotopy was
called $5-1=4$ times on a system with $10$ solutions.
In total, $5+10+9+40=64$ individual paths were tracked.
In contrast, a direct use of PHCPack involves tracking exactly $\MV(\mathcal A(e_1,e_2,e_3,e_4)) = 50$ paths, albeit
in a higher dimensional space.  

For more general 
$\imath$ and $\jmath$, the recursive structure of our computation is similar to
Example~\ref{Ex:computation1}. 
Some notable differences include 
\begin{enumerate}
\item $\imath(\Adot)$ or $\jmath(\Bdot)$ may be lacunary which induces further decompositions. 
\item Monomial changes must be computed as $\imath(\Adot)$ or $\jmath(\Bdot)$ could involve all variables.
\item For most $\imath,\jmath$ the univariate polynomial obtained from $h$ has degree $5$ and is solved by computing
  eigenvalues of its companion matrix.
\end{enumerate}
For example,
 if we choose $e_1-e_2, e_2-e_3,e_3-e_4, e_4-e_5$ for $\imath_1,\imath_2,\jmath_1,\jmath_2$, then again, no
system encountered in the algorithm is lacunary, but the univariate polynomial obtained from $h$ has support
$\{0,1,2,3,4,5\}$, so that $\MV(\calA(\imath,\jmath))=250$.

In our computational experiment, we produced $10962$ instances of $\mathcal A(\imath,\jmath)$ and solved each instance using our implementation of Algorithm
\ref{alg:SDS} as well as with PHCPack.
Due to ill-conditioning 
and heuristic choices of tolerances, some computations failed to produce all solutions.
Such occurrences are not included in the data displayed below.

We give a scatter plot of the elapsed timings in Figure \ref{fig:scatter} with respect to the mixed volume of the
system. Figure \ref{fig:boxplots} displays box plots of the timings of each algorithm grouped by sizes of mixed
volumes.
The boxes range from the first quartile $q_1$ to the third quartile $q_3$ of the group data with whiskers
extending to the smallest and largest data points which are not outliers.
Outliers are the data points which are smaller than $q_1-1.5I$ or larger than $q_3+1.5I$ where $I$ is the length of the
interquartile range $(q_1,q_3)$. 

\begin{figure}[htb!]
	\begin{center}
	{\includegraphics[scale=.26]{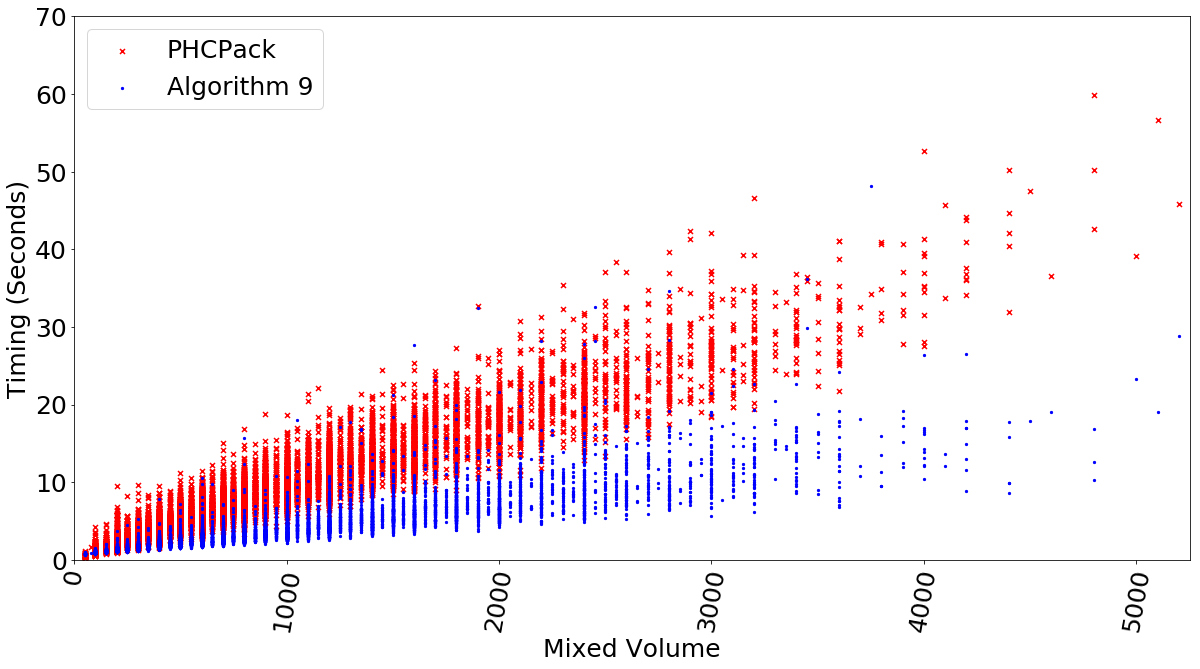}}
   \end{center}
\caption{Scatter plot of timings}
\label{fig:scatter}
\end{figure}
\begin{figure}[htb!]
	\begin{center}
	{\includegraphics[scale=.26]{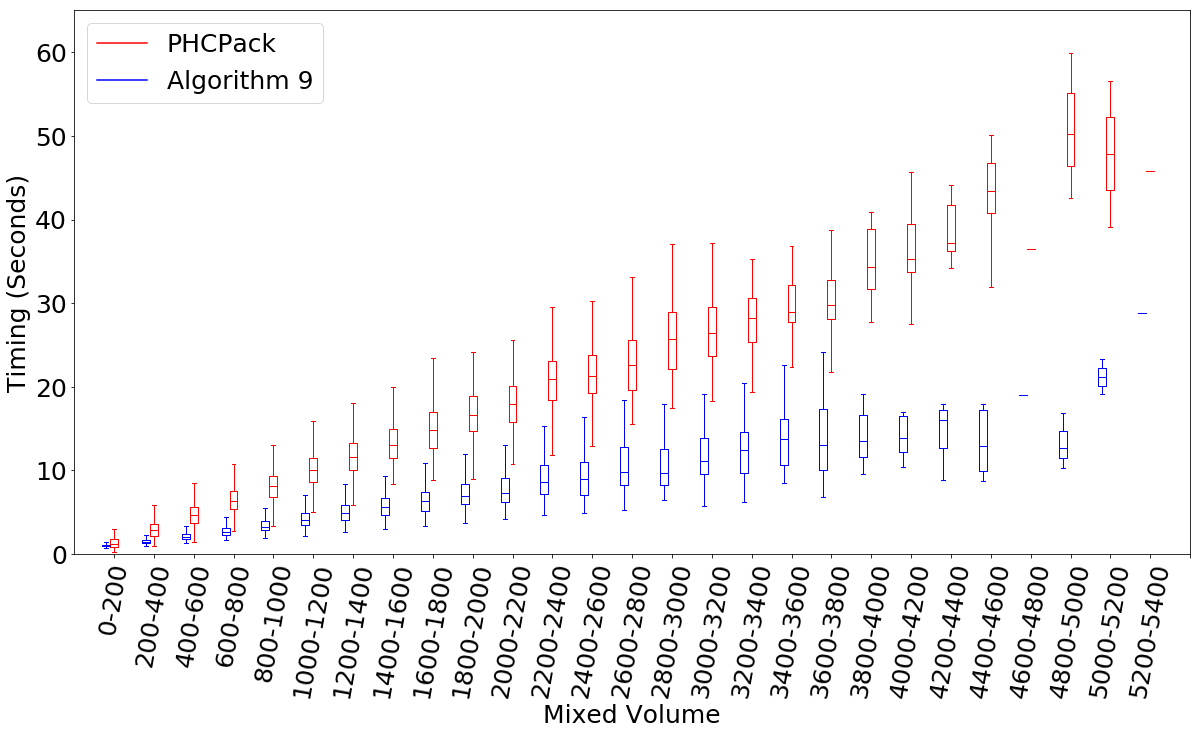}}
   \end{center}
\caption{Box plot of timings}
\label{fig:boxplots}
\end{figure}

A more detailed account of these computations, along with our implementation in Macaulay2, may be found
at the website for this paper~\cite{ThomasWebsite}. 
%
\def\cprime{$'$}
\providecommand{\bysame}{\leavevmode\hbox to3em{\hrulefill}\thinspace}
\providecommand{\MR}{\relax\ifhmode\unskip\space\fi MR }
\providecommand{\MRhref}[2]{%
  \href{http://www.ams.org/mathscinet-getitem?mr=#1}{#2}
}
\providecommand{\href}[2]{#2}

\end{document}